\newcommand{\R}{\mathbb{R}}
\newcommand{\be}{\begin{equation}}
\newcommand{\ee}{\end{equation}}
\newcommand{\bea}{\begin{eqnarray}}
\newcommand{\eea}{\end{eqnarray}}
\newcommand{\bean}{\begin{eqnarray*}}
\newcommand{\eean}{\end{eqnarray*}}
\newcommand{\xa}{\alpha}
\newcommand{\xb}{\beta}
\newcommand{\xg}{\gamma}
\newcommand{\xG}{\Gamma}
\newcommand{\xd}{\delta}
\newcommand{\xD}{\Delta}
\newcommand{\xe}{\varepsilon}
\newcommand{\xz}{\zeta}
\newcommand{\xl}{\lambda}
\newcommand{\xL}{\Lambda}
\newcommand{\xm}{\mu}
\newcommand{\xn}{\nu}
\newcommand{\xs}{\sigma}
\newcommand{\xf}{\phi}
\newcommand{\xF}{\Phi}
\newcommand{\xo}{\omega}
\newcommand{\inn}{{\quad\hbox{in } }}
\newcommand{\foral}{\quad\mbox{for all}\quad}
\newtheorem{theorem}{Theorem}[section]
\newtheorem{lemma}[theorem]{Lemma}
\newtheorem{proposition}[theorem]{Proposition}
\newtheorem{notation}[theorem]{Notation}
\begin{document}

\title[ Ancient  solutions to the Allen-Cahn equation]
{ Ancient multiple-layer solutions to the Allen-Cahn equation}


\author[M. del Pino]{Manuel del Pino}
\address{\noindent   Departamento de
Ingenier\'{\i}a  Matem\'atica and Centro de Modelamiento
 Matem\'atico (UMI 2807 CNRS), Universidad de Chile,
Casilla 170 Correo 3, Santiago,
Chile.}
\email{delpino@dim.uchile.cl}

\author[K. T. Gkikas]{Konstantinos T. Gkikas}
\address{\noindent  Centro de Modelamiento
 Matem\'atico (UMI 2807 CNRS), Universidad de Chile,
Casilla 170 Correo 3, Santiago,
Chile.}
\email{kgkikas@dim.uchile.cl}

\date{}\maketitle


\begin{abstract}
We consider the parabolic one-dimensional Allen-Cahn equation
$$u_t= u_{xx}+ u(1-u^2)\quad (x,t)\in \R\times (-\infty, 0].$$ The steady state
$w(x) =\tanh (x/\sqrt{2})$, connects, as a ``transition layer'' the stable phases $-1$ and $+1$.
We construct a solution $u$  with any given number $k$ of transition layers between $-1$ and $+1$. At main order they consist of $k$ time-traveling copies of $w$ with interfaces diverging  one to each other as $t\to -\infty$.
More precisely, we find
$$
u(x,t) \approx   \sum_{j=1}^k (-1)^{j-1}w(x-\xi_j(t))  + \frac 12 ( (-1)^{k-1}-  1)\quad \hbox{ as } t\to -\infty,
$$
where  the functions $\xi_j(t)$ satisfy a first order Toda-type system. They are given by
$$\xi_j(t)=\frac{1}{\sqrt{2}}\left(j-\frac{k+1}{2}\right)\log(-t)+\xg_{jk},\quad j=1,...,k,$$
for certain explicit constants $\xg_{jk}.$


\end{abstract}









\date{}\maketitle

\setcounter{equation}{0}
\section{Introduction and statement of the main result}

A classical model for phase transitions is the Allen-Cahn equation \cite{ac}
\be u_t = \Delta u + f(u) \inn \R^N,   \label{ac0} \ee
where $f(u) = - F'(u)$ where $F$ is a {\em balanced bi-stable potential}
namely $F$ has exactly two non-degenerate global minimum points $u=+1$ and $u=-1$. The model is
$$F(u) = - \frac 14(1-u^2)^2,$$
 so that $f(u) = (1-u^2)u $.
The constant functions  $u=\pm 1$ correspond to stable equilibria of Equation \eqref{ac0}.  They are idealized as two phases of a
material. A solution $u(x)$ whose values lie at all times in $[-1,1]$ and in most of the space $\R^N$ takes values close to either $+1$ or $-1$ corresponds to a continuous realization of the phase state of the material, in which the two stable states coexist.
There exists a large literature on this type of solutions (in the static and dynamic cases). The main point is to derive qualitative
information on the ``interface region'', that is the walls separating the two phases. A close connection between these walls and minimal surfaces and surfaces evolving by mean curvature has been established in many works. We refer the reader for instance to
\cite{5authors,dkwcpam,hamel0,hamel2,matano,ninomiya,taniguchi}.
On the other hand, the main difference between interfaces and surfaces evolving mean curvature surfaces, is that in the phase transition model different components do interact giving rise to interesting motion patterns.

\medskip
The purpose of this paper is to study multiple-interface interaction in the simplest, one-dimensional scenario. We will construct non stationary solutions defined at all times, which in the {\em ancient regime} multiple, quite separated transitions are present, with a dynamical law that is rigorously established. More precisely, we consider the problem of building {\em ancient solutions} $u(x,t)$ to the
one-dimensional Allen-Cahn equation \cite{ac}

\be
u_t= u_{xx} +u(1-u^2) \inn \R\times (-\infty ,0] \label{alen},
\ee
which exhibit a finite number of transitions that connect the values $-1$ and $+1$.

\medskip
The building blocks of these solutions are the single-transition layer equilibrium solutions to   \eqref{alen}
$$ u''+u(1-u^2)=0\quad\hbox{in}\;\mathbb{R},\quad\lim_{x\rightarrow\infty}u(x)=1,\quad\quad\lim_{x\rightarrow-\infty}u(x)=-1,$$
which in phase plane represents a heteroclinic monotone connection between the constant equilibria $\pm 1$.
This solution is unique up to translations. The unique one with $u(0)=0$ will be denoted from now on  $w(x)$ and it is
 given in closed form by
 \be w(x)=\tanh\left(\frac{x}{\sqrt{2}}\right ). \label{w}\ee

Given an even number $k$,  we want to build a solution $u(x,t)$ to \eqref{alen} that near each of $k$ ordered, very distant ``transition points''
$\xi_j(t),\  j=1,\ldots, k$
satisfies
$$u(x,t) \approx \pm w(x-\xi_j(t)). $$
More precisely, we want to find a solution of the form
\be
u(t,x)=   -1+  \sum_{j=1}^{k}(-1)^{j+1}w(x-\xi_j(t))  +\psi(t,x),\label{formin}
\ee
with
\be
\xi_1(t)<\xi_2(t)<...<\xi_k(t),\qquad\xi_j(t)=-\xi_{k-j+1}(t),\label{formin2}
\ee
where the perturbation function  $\psi(t,x)$  goes to zero uniformly as
$t\rightarrow-\infty$ and
 satisfies the orthogonality conditions
\be
\int_{\mathbb{R}}\psi(t,x)w'(x-\xi_i(t))dx=0\foral i=1,...,k,\;t<-T, \label{orthcondin}
\ee
for a suitable large $T>0$.
We shall establish the existence of a solution with this characteristic. In fact, as we will see the interface
dynamic is driven at main order by the following system of differential equations (a first order Toda system)
\be
\frac{1}{\xb}\xi_j'-e^{-\sqrt{2}(\xi_{j+1}-\xi_j)}+e^{-\sqrt{2}(\xi_{j}-\xi_{j-1})}=0,\quad j=1,...,k,\;\quad\;t\in(-\infty, 0].\label{toda1}
\ee

The dynamic law of interface interaction was formally derived in a related Neumann problem by Fusco and Hale \cite{fuscohale}, see also \cite{cp1,cp2}.
In \cite{chen} Chen, Guo and Ninomiya built a solution with two transition layers traveling in opposite directions (the case $k=2$ for us). The
argument employed there was based on barriers, and it is not clear to us how to extend it to multiple transitions.
In \cite{dds} the first order Toda system appears in the construction of ancient solutions for the Yamabe flow.

\medskip
 More precisely, we will find
$$\xi_j(t)=\xi^0_j(t)+h_j(t),\quad j=1,...,k,$$
for some suitable parameter functions $h_j(t),$ such that the parameter functions $h_j(t)$ will decay in $|t|,$ as $t\rightarrow-\infty$ for all $j=1,...,k$
and the functions $\xi^0_j$  solve the first order Toda system,
\be
\frac{1}{\xb}\xi_j'-e^{-\sqrt{2}(\xi_{j+1}-\xi_j)}+e^{-\sqrt{2}(\xi_{j}-\xi_{j-1})}=0,\quad j=1,...,k,\;\; 
\quad\;t\in(-\infty , 0],\label{toda}
\ee
with the conventions
$$\xi_{k+1}=\infty\quad \hbox{and}\quad \xi_0=-\infty,$$
where  $T_0>0$ and
\be
\xb=\frac{6\int_{\mathbb{R}}e^\frac{2x}{\sqrt{2}}(1-w^2(x))w'(x)dx}{\int_{\mathbb{R}}(w'(x))^2dx}.\label{beta}
\ee
We will see that a solution of the above system is given by
\be\xi_j^0(t)=\frac{1}{\sqrt{2}}\left(j-\frac{k+1}{2}\right)\log(-2\sqrt{2}\xb t)+\xg_{jk},\quad j=1,...,k,\;\; 
\label{pp}\ee
for certain explicit constants $\xg_{jk}.$

Our main result states as follows.
\begin{theorem}
Let $k\geq2$ be an even integer and  $\xi^0_j$ be the solution \eqref{pp} of the Toda system \eqref{toda}. Then there exists a number $T >0$ and a solution $u(t,x)$ to (\ref{alen}) defined on $(-\infty,-T ]\times\mathbb{R},$
of the form (\ref{formin})-(\ref{orthcondin}), with $\xi$ of the form:
$$\xi_j(t)=\xi^0_j(t)+h_j(t),\quad j=1,...,k.$$
where the functions  $\psi(t,x),$ and $h_j(t)$ tend to zero in suitable uniform
norms as $t\rightarrow-\infty.$\label{maintheorem}
\end{theorem}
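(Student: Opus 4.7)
The plan is to carry out an infinite-dimensional Lyapunov--Schmidt reduction adapted to the parabolic ancient-time setting. I would regard $\xi_j(t)=\xi_j^0(t)+h_j(t)$ as a vector of parameters, with $h=(h_1,\dots,h_k)$ a small correction to be determined, and seek the solution in the form $u=U[\xi]+\ps$, where
$$U[\xi](x,t):=-1+\sum_{j=1}^{k}(-1)^{j+1}w(x-\xi_j(t)).$$
Substituting into \eqref{alen} produces
$$\ps_t-\ps_{xx}-f'(U)\ps = E[\xi]+N(\ps),$$
where the error is $E[\xi]:=U_{xx}-U_t+f(U)$ and $N(\ps):=f(U+\ps)-f(U)-f'(U)\ps$ is quadratic in $\ps$. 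Since each $w(\cdot-\xi_j)$ solves the stationary ODE, $E$ splits as $E=-\sum_j(-1)^{j+1}w'(x-\xi_j)\,\xi_j'(t)+I[\xi]$, where the \emph{interaction term} $I[\xi]$ has, near the $j$-th layer, leading order controlled by $e^{-\sqrt{2}(\xi_{j+1}-\xi_j)}+e^{-\sqrt{2}(\xi_{j}-\xi_{j-1})}$, coming from the tail $1-w^2(x)\sim 4e^{-\sqrt{2}|x|}$ of the neighboring layers. Testing the equation against $w'(x-\xi_j)$ and demanding the projection to vanish at leading order is precisely what forces $\xi_j^0$ to satisfy the Toda system \eqref{toda}, with the coefficient $\xb$ in \eqref{beta} arising from these inner products. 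The choice \eqref{pp} thus annihilates the leading projection of $E$ and leaves a polynomially small remainder in $|t|$.

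Next I would develop a projected parabolic linear theory for $\mathcal{L}\ps:=\ps_t-\ps_{xx}-f'(U)\ps$ on $(-\infty,-T]\times\R$. For data $g$ decaying polynomially in $|t|$, the projected problem is
$$\mathcal{L}\ps = g+\sum_{j=1}^{k}c_j(t)\,w'(x-\xi_j), \qquad \int_{\R}\ps(x,t)\,w'(x-\xi_j(t))\,dx = 0 \;\;\forall j,\ t\le-T,$$
with Lagrange multipliers $c_j(t)$. The stationary operator $-\partial_x^2-f'(w)$ has a one-dimensional kernel spanned by $w'$ and a uniform spectral gap above zero; since the interfaces $\xi_j^0$ diverge from each other, the quadratic form of $-\partial_x^2-f'(U)$ is uniformly positive on the $L^2$-orthogonal complement of $\{w'(\cdot-\xi_j)\}_{j=1}^{k}$ for $t\le-T$. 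Coupling this coercivity with a backward-in-time parabolic energy estimate yields a priori bounds and hence existence of a unique $(\ps,c)$ in a weighted $L^\infty$ topology with the desired polynomial decay.

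Finally, imposing the solvability conditions $c_j(t)\equiv 0$ reduces the problem to a non-autonomous linearized Toda-type system for $h$, schematically of the form
$$\frac{1}{\xb}h_j'(t)-\sqrt{2}\,a_j(t)(h_{j+1}-h_j)+\sqrt{2}\,a_{j-1}(t)(h_j-h_{j-1}) = \mathcal{R}_j\bigl(t,h,\ps[h]\bigr),$$
where the coefficients $a_j(t)=e^{-\sqrt{2}(\xi_{j+1}^0-\xi_j^0)}$ are of order $|t|^{-1}$ by \eqref{pp}, and $\mathcal{R}_j$ is polynomially small in $|t|$ and Lipschitz-contractive in $(h,\ps)$. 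I expect the main technical obstacle to lie here: one must construct a backward-in-time inverse of this singular Jacobi-type operator in a space of decaying functions, producing enough gain in the decay of $h$ to dominate the $h$-dependence of $U$, $E$ and the projections. The imposed symmetry $\xi_j=-\xi_{k-j+1}$ in \eqref{formin2} removes the zero mode corresponding to rigid translation of the whole cluster, which should make the linear inversion possible. Once this inversion and the matching nonlinear estimates are in place, a standard Banach fixed-point argument on the pair $(\ps,h)$ produces the solution asserted in Theorem \ref{maintheorem}.
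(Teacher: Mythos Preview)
Your proposal is correct and follows essentially the same Lyapunov--Schmidt scheme as the paper: projected linear theory for $\psi$ in a weighted space, a fixed point giving $\psi=\Psi(h)$, then the solvability conditions $c_j\equiv 0$ yielding a perturbed Toda system for $h$ solved by a second fixed point, with the symmetry $\xi_j=-\xi_{k-j+1}$ eliminating the translation mode. The paper differs only in technical implementation: the linear a priori estimate is obtained in an explicit weighted $L^\infty$ norm (weight $\Phi$ exponential in the distance to the neighboring layers) via a contradiction--barrier--compactness argument rather than a direct energy method, and the reduced ODE is inverted after an explicit diagonalization of the tridiagonal coupling matrix.
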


If $k$ is odd a similar construction can be made, with slightly different asymptotic configurations. For notational simplicity we will only consider the case of an even $k$ in this paper.

\setcounter{equation}{0}
\section{The first approximation}

We want to solve the problem,
$$u_t= u_{xx}+f(u),\qquad\mathrm{in}\;\;(-\infty,-T)\times\mathbb{R},$$
where $f(u)=u(1-u^2),$ and $T$ is a large positive number whose value can be adjusted at different steps.

Let $k\geq2$ be an even integer. We set
$$w_i(t,x)=w(x-\xi_i(t)),$$
where the functions $\xi_i(t)$ are ordered and symmetric,
$$\xi_1(t)<\xi_2(t)<...<\xi_{\frac{k}{2}}(t)<0<\xi_{\frac{k}{2}+1}<...<\xi_k(t),\qquad\xi_j (t) =-\xi_{k-j+1}(t).$$
We set $\xi(t)=(\xi_1(t),....,\xi_k(t))^T.$ And we write
\be
\xi(t):=\xi^0(t)+h(t),\label{ksij}
\ee
where
$$\xi^0_j=\frac{1}{\sqrt{2}}\left(j-\frac{k+1}{2}\right)\log(-2\sqrt{2}\xb t)+\xg_j,$$
$\xb$ has been defined in \eqref{beta} and $\xg_j$ are constants which we will determine them later. In addition, the function $h(t)$ satisfies
$||h(t)||_{L^\infty}+||(t-1)h'(t)||_{L^\infty}\leq1$ and $\lim_{t\rightarrow-\infty}|h(t)|+|h'(t)|=0.$

\medskip
We look for a solution of the form
\be
u(t,x)=\sum_{j=1}^{k}(-1)^{j+1}w_j(t,x)-1+\psi(t,x).\label{form}
\ee
Set
\be
z(t,x)=\sum_{j=1}^{k}(-1)^{j+1}w_j(t,x)-1\label{z}.
\ee
We would like $\psi$ to satisfy
\begin{align}
\psi_t=\psi_{xx}+f'(z(t,x))\psi+E+N(\psi)&-\sum_{i=1}^k c_i(t)w'(x-\xi_i(t)), \quad (-\infty,-T)\times\mathbb{R}, \label{mainpro}\\
\int_{\mathbb{R}}\psi(t,x)w'(x-\xi_i(t))dx&=0,\qquad\forall i=1,...,k,\;t<-T, \label{orthcond}
\end{align}
where
\be
E=\sum_{j=1}^{k}(-1)^{j+1}w'(x-\xi_j(t))\xi_j'(t)+f(z(t,x))-\sum_{j=1}^{k}(-1)^{j+1}f(w_j(t,x)),\label{error}\ee
$$N(\psi)=f(\psi(t,x)+z(t,x))-f(z(t,x))-f'(z(t,x))\psi,$$
and $c_i(t)$ have been chosen such that $\psi$ satisfies the orthogonality condition (\ref{orthcond}), namely in such a way that the following (nearly diagonal) system holds
\begin{align}\nonumber
\sum_{i=1}^kc_i(t)\int_{\mathbb{R}}&w'(x-\xi_i(t))w'(x-\xi_j(t))dx\\ \nonumber
&=\int_{\mathbb{R}}\left(\psi_{xx}(t,x)+f'(z(t,x))\psi(t,x)\right) w'(x-\xi_j(t)) dx\\ \nonumber
&-\xi'_j(t)\int_{\mathbb{R}}\psi(t,x)w''(x-\xi_j(t))dx\\
&+\int_{\mathbb{R}}(E+N(\psi))w'(x-\xi_j(t))dx,\qquad\forall i=1,...,k,\;t<-T.
\end{align}

\medskip
\noindent Later we will choose $h(t)$ such that $c_i(t)=0,\;\foral\;i=1,...,k.$

In the rest of this work we use the following notations
\begin{notation}
i)
\be
\xi=\xi^0+h,\label{xi}
\ee
where $h:\mathbb{R}\mapsto\mathbb{R}^k$ is a function that satisfies
\be\sup_{t\leq -1}|h(t)|+\sup_{t\leq -1}|t||h'(t)|<1\label{h}.
\ee
ii)
$$z(t,x)=\sum_{j=1}^{k}(-1)^{j+1}w(x-\xi_j(t))-1.$$
\label{notation}
\end{notation}

In the following lemma we find a bound for the error term $E=E(t,x)$ in \eqref{error}.
\begin{lemma}
Let $T_0>1,$ $0<\xs<\sqrt{2},$ we define
\bea\nonumber
\xF(t,x)&=&e^{\xs(-x+\xi_{j-1}^0(t))}+e^{\xs(x-\xi_{j+1}^0(t))},\\
&&\mathrm{if}\;\;\qquad\frac{\xi_j^0(t)+\xi_{j-1}^0(t)}{2}\leq x\leq \frac{\xi_j^0(t)+\xi_{j+1}^0(t)}{2},\;j=1,...,k,\label{bound}
\eea
with $\xi_0^0=-\infty$ and $\xi_{k+1}^0=\infty.$ Then there exists a uniform constant $C>0$ which depends only on $k,$ such that
$$|E(t,x)|\leq C\xF(t,x),\quad\forall (t,x)\in(-\infty, -T_0]\times\mathbb{R},$$
where $E$ is the error term in \eqref{error} and $\xi$ satisfies the assumptions \eqref{xi} and \eqref{h}.\label{remark}
\end{lemma}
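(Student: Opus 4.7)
My plan is to decompose the error as $E = E_1 + E_2$, where the \emph{velocity part}
\[ E_1(t,x) := \sum_{j=1}^{k} (-1)^{j+1} w'(x-\xi_j(t))\,\xi_j'(t) \]
collects the terms generated by the time derivative of the ansatz, and the \emph{interaction part}
\[ E_2(t,x) := f(z(t,x)) - \sum_{j=1}^{k}(-1)^{j+1} f(w_j(t,x)) \]
collects the failure of $f$ to split across the sum. I will bound each of them on the slab $I_j := \bigl[\tfrac12(\xi_j^0+\xi_{j-1}^0),\,\tfrac12(\xi_j^0+\xi_{j+1}^0)\bigr]$, on which $\xF$ has the form stated in the lemma. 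Throughout I will use the standard asymptotics $|w(y)\mp 1| + |w'(y)|\le Ce^{-\sqrt{2}|y|}$ together with the bound \eqref{h}, giving $|\xi_l(t) - \xi_l^0(t)|\le 1$, so that exponential spacings based on $\xi$ and on $\xi^0$ differ only by a uniform multiplicative constant.

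To estimate the interaction part, I observe that on $I_j$ each $w_l$ with $l\ne j$ is exponentially close to its stable limit: $|w_l-1|\le Ce^{-\sqrt 2(x-\xi_l)}$ for $l<j$, $|w_l+1|\le Ce^{\sqrt 2(x-\xi_l)}$ for $l>j$. Writing
\[ z - (-1)^{j+1} w_j = \sum_{l<j}(-1)^{l+1}(w_l-1) + \sum_{l>j}(-1)^{l+1}(w_l+1) + C_{j,k}, \]
with $C_{j,k} := \sum_{l<j}(-1)^{l+1}-\sum_{l>j}(-1)^{l+1}-1$, a case-check using that $k$ is even shows $C_{j,k}=0$. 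Retaining the two largest exponentials (corresponding to $l=j-1$ and $l=j+1$) yields $|z-(-1)^{j+1}w_j|\le C\bigl(e^{-\sqrt 2(x-\xi_{j-1})}+e^{-\sqrt 2(\xi_{j+1}-x)}\bigr)$. Since $f$ is odd, $(-1)^{j+1}f(w_j) = f((-1)^{j+1}w_j)$, and a mean-value bound on the uniformly bounded range of $z$ and $(-1)^{j+1}w_j$ gives $|f(z)-(-1)^{j+1}f(w_j)|\le C\,|z-(-1)^{j+1}w_j|$. The leftover terms $(-1)^{l+1}f(w_l)$ with $l\ne j$ satisfy $|f(w_l)| \le |1-w_l^2|\le Ce^{-2\sqrt 2|x-\xi_l|}$ and are absorbed. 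Using $\xs<\sqrt 2$ together with $|\xi_l - \xi_l^0|\le 1$ finally gives $|E_2|\le C\xF$ on $I_j$.

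For the velocity part, the explicit formula for $\xi_j^0$ gives $|(\xi_j^0)'(t)|\le C/|t|$, and \eqref{h} yields $|h'(t)|\le C/|t|$, hence $|\xi_j'(t)|\le C/|t|$. The dominant contribution is $w'(x-\xi_j)\xi_j'$; using $\xi_{j+1}^0-\xi_j^0 = \tfrac{1}{\sqrt 2}\log(-2\sqrt 2\xb t)+O(1)$, on the half $x\ge\xi_j^0$ of $I_j$ we have $\xF\ge e^{-\xs(\xi_{j+1}^0-x)}\ge C|t|^{-\xs/\sqrt 2}e^{\xs(x-\xi_j^0)}$, so
\[ \frac{|w'(x-\xi_j(t))\,\xi_j'(t)|}{\xF(t,x)}\le C\,e^{-(\sqrt 2+\xs)(x-\xi_j^0)}\,|t|^{\xs/\sqrt 2 - 1}\le C, \]
since $\xs<\sqrt 2$ and $|t|\ge T_0>1$. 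The half $x\le\xi_j^0$ is handled symmetrically, and the summands with $l\ne j$ are smaller still (their peak at $\xi_l$ lies outside $I_j$). The hard part here is the algebraic identity $C_{j,k}=0$: the sum of $\pm 1$'s arising from the stable limits of the $w_l$ must exactly cancel the $-1$ in the definition of $z$, which is precisely where the parity of $k$ enters crucially. Once that cancellation is in hand, the remainder of the argument is a routine bookkeeping of exponential decay rates.
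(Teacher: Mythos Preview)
Your argument is correct and follows essentially the same line as the paper's proof: both localize to the slab $I_j$, use $|\xi_j'|\le C/|t|$ together with $\sup_x w'(x-\xi_j)/\xF \le C|t|^{\xs/\sqrt 2}$ for the velocity part, and write $z=(-1)^{j+1}w_j+g$ with $g$ exponentially small on $I_j$ for the interaction part. The paper expands the cubic $f(g+(-1)^{j+1}w_j)$ explicitly, whereas you use the mean-value bound $|f(z)-f((-1)^{j+1}w_j)|\le C|z-(-1)^{j+1}w_j|$; both reductions lead to the same exponential estimate. Your explicit verification of the constant cancellation $C_{j,k}=0$ is exactly what the paper uses implicitly when it sets $z=g+(-1)^{j+1}w_j$.

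One harmless slip: $|1-w_l^2|=\mathrm{sech}^2\!\big((x-\xi_l)/\sqrt 2\big)\le Ce^{-\sqrt 2\,|x-\xi_l|}$, not $e^{-2\sqrt 2\,|x-\xi_l|}$. Since $\xs<\sqrt 2$ and $x-\xi_{j-1}^0>0$ (resp.\ $\xi_{j+1}^0-x>0$) on $I_j$, the weaker decay $e^{-\sqrt 2\,|x-\xi_l|}$ is still dominated by $\xF$, so your conclusion is unaffected.
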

\begin{proof}

First note that there exists a positive constant $c:=c(\xg_1,...,\xg_k,\xb)>0$ such that the following inequality holds

$$\sup_{x\in\mathbb{R}}\left\{\frac{w'(x-\xi_j(t))}{\xF(t,x)}\right\}\leq c|t|^{\frac{\xs}{\sqrt{2}}},\quad\forall\;j=1,...,k.$$
Using the fact that $$|\xi_j'|\leq C_1(\xb,k) |t|^{-1},$$
we obtain that there exists a positive constant $C_2=C_2(k)$ such that
$$\sup_{x\in\mathbb{R}}\left\{\frac{w'(x-\xi_j(t))}{\xF(t,x)}\right\}|\xi_j'|\leq C_2|t|^{\frac{\xs}{\sqrt{2}}-1},\quad\forall\;j=1,...,k.$$

Now, let $$\frac{\xi_j^0(t)+\xi_{j-1}^0(t)}{2}\leq x\leq \frac{\xi_j^0(t)+\xi_{j+1}^0(t)}{2},\;j=1,...,k,$$ with $\xi_0^0=-\infty$ and $\xi_{k+1}^0=\infty.$

If $i\leq j-1,$ by our assumptions on $\xi_i,$ there exists a uniform constant $C>0$ such that

$$|w(x-\xi_i(t))-1|\leq Ce^{\sqrt{2}(-x+\xi_{j-1}^0(t))}.$$
Similarly if $i\geq j+1$

$$|w(x-\xi_i(t))+1|\leq Ce^{\sqrt{2}(x-\xi_{j+1}(t))}.$$

We set
$$g=\sum_{i=1}^{j-1}(-1)^{i+1}\left(w(x-\xi_i)-1\right)+\sum_{i=j+1}^k(-1)^{i+1}\left(w(x-\xi_i)+1\right).$$

Then
\begin{align}\nonumber
&\left|f\left(g+(-1)^{j+1}w(x-\xi_j(t))\right)-\sum_{i=1}^{k}(-1)^{i+1}f(w_i(t,x))\right|\\ \nonumber
&=\left|f(g)+(-1)^{j+1}f(w_j(t,x))-(-1)^{j+1}3g^2w_j-3gw^2_j-\sum_{i=1}^{k}(-1)^{i+1}f(w_i(t,x))\right|\\ \nonumber
&\leq C|g|+ \left|f(g)-\sum_{i=1,\;i\neq j}^{k}(-1)^{i+1}f(w_i(t,x))\right|\\ \nonumber
&\leq C\left(\sum_{i=1}^{j-1}|w(x-\xi_i)-1|+\sum_{i=j+1}^k(-1)^{i+1}|w(x-\xi_i)+1|\right).
\end{align}
Combining all above and using the properties of $\xi$ we can reach to the desired result.
\end{proof}

\setcounter{equation}{0}
\section{The linear problem}

This section is devoted to build a solution to the linear parabolic problem

\be
\psi_t=\;\psi_{xx}+f'(z(t,x))\psi+h(t,x)-\sum_{j=1}^k c_i(t)w'(x-\xi_j(t)),\qquad \mathrm{in}\;\;\;(-\infty,-T_0]\times\mathbb{R},\label{fixprop}
\ee
\begin{align}
\int_{\mathbb{R}}\psi(t,x)w'(x-\xi_i(t))dx&=0,\qquad\forall i=1,...,k,\ t\in (-\infty,-T_0],  \label{orthcond1}
\end{align}
for a bounded function $h,$ and  $T_0>0$ fixed sufficiently large.

The numbers $c_i(t)$ are exactly those that make the relations above consistent,
namely, by definition for each $t<-T_0$ they solve the linear system of equations
\begin{align}\nonumber
\sum_{i=1}^kc_i(t)\int_{\mathbb{R}}w'(x-\xi_i(t))w'(x-\xi_j(t))dx&=\int_{\mathbb{R}}\left(\psi_{xx}+f'(z)\psi\right) w'(x-\xi_j(t)) dx\\ \nonumber
&-\xi'_j(t)\int_{\mathbb{R}}\psi(t,x) w''(x-\xi_j(t))dx\\
&+\int_{\mathbb{R}} h w'(x-\xi_j(t))dx,\quad\ j=1,...,k .
\end{align}
This system can indeed be solved uniquely since if $T_0$ is taken sufficiently large, the matrix  with coefficients $\int_{\mathbb{R}}w'(x-\xi_i(t))w'(x-\xi_j(t))dx$
is nearly diagonal.

Our purpose is to build a linear operator $\psi = A(h)$ that defines a solution of \eqref{fixprop}-\eqref{orthcond1} which is bounded for norm suitably adapted to our setting.

\medskip

Let $\mathcal{C}_\xF((s,t)\times\mathbb{R})$ is the space of continuous functions
with norm
$$
||u||_{\mathcal{C}_\xF((s,t)\times\mathbb{R})}=\left|\left|\frac{u}{\xF}\right|\right|_{L^\infty((s,t)\times\mathbb{R})},$$
where $\xF$ has been defined in \eqref{bound}.

\begin{proposition} \label{prop1} There exist positive numbers $T_0$ and $C$ such that
for each $h\in\mathcal{C}_\xF((-\infty ,0)\times\mathbb{R}),$ there exists a solution
of problem \eqref{fixprop}-\eqref{orthcond1}  $\psi = A(h)$ which defines
a linear operator of $h$ and satisfies the estimate
\be
||\psi||_{\mathcal{C}_\xF((-\infty,t)\times\mathbb{R})}\leq C||h||_{\mathcal{C}_\xF((-\infty,t)\times\mathbb{R})},\qquad\forall t\leq -T_0.
\label{estfix**}
\ee\label{fixth}
\end{proposition}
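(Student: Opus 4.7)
The plan is to construct $\psi = A(h)$ by first solving the problem on finite time intervals $[-T,-T_0]$ with zero initial datum at $t=-T$, and then passing to the limit $T\to\infty$; the passage to the limit is justified by a uniform a priori estimate of the form \eqref{estfix**}, which is the heart of the proof. For each fixed $T$, the approximate problem is a linear parabolic equation with the orthogonality conditions \eqref{orthcond1} enforced via the multipliers $c_i(t)$ determined by the nearly diagonal linear system displayed after the proposition. Since the layers separate as $\xi_{j+1}-\xi_j\sim \tfrac{1}{\sqrt{2}}\log|t|$, the Gram matrix $\int w'(x-\xi_i)w'(x-\xi_j)\,dx$ is uniformly invertible for $T_0$ large, so $c_i(t)$ is a bounded linear functional of $\psi$. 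Existence of an approximating solution $\psi_T$ then follows from standard parabolic theory, for instance by Galerkin approximation in the closed subspace orthogonal to the $w'(x-\xi_i(t))$'s.

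The core task is to prove $\|\psi_T\|_{\mathcal{C}_\Phi}\leq C\|h\|_{\mathcal{C}_\Phi}$ with $C$ independent of $T$, and I would argue by contradiction. Assume sequences $T_n\to\infty$, $\psi_n$, $h_n$ with $\|\psi_n\|_{\mathcal{C}_\Phi}=1$ and $\|h_n\|_{\mathcal{C}_\Phi}\to 0$, and choose points $(t_n,x_n)$ with $|\psi_n(t_n,x_n)|\geq\tfrac{1}{2}\Phi(t_n,x_n)$. Two scenarios need to be ruled out. \emph{Near a layer:} if $|x_n-\xi_{j_n}(t_n)|$ remains bounded along a subsequence, set $\tilde\psi_n(t,x)=\psi_n(t+t_n,\,x+\xi_{j_n}(t+t_n))/\Phi(t_n,x_n)$. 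Because $t_n\to-\infty$ one has $\xi'_{j_n}\to 0$, the neighbouring layers drift to $\pm\infty$, and $f'(z)\to f'(w(x))$ locally uniformly, so parabolic compactness produces a nontrivial bounded ancient solution on $\mathbb{R}^2$ of
$$\tilde\psi_t=\tilde\psi_{xx}+f'(w(x))\tilde\psi,\qquad \int_{\mathbb{R}}\tilde\psi(t,x)\,w'(x)\,dx=0\quad\forall t.$$
The operator $L_0=\partial_{xx}+f'(w)$ has $0$ as its top simple eigenvalue (with eigenfunction $w'$) and strictly negative spectrum otherwise, so on $\{w'\}^{\perp}$ every spectral mode grows exponentially as $t\to-\infty$; this forces $\tilde\psi\equiv 0$, contradicting the normalization. \emph{Far from every layer:} if $\mathrm{dist}(x_n,\{\xi_j(t_n)\})\to\infty$ then $f'(z)\to -2$ on a parabolic neighbourhood of $(t_n,x_n)$; here I would rule out concentration by a barrier argument, showing that $K\|h\|_{\mathcal{C}_\Phi}\Phi$ is a supersolution of the linear equation in the inter-layer regions (the restriction $\sigma<\sqrt{2}$ in the definition of $\Phi$ yields precisely $\partial_t\Phi-\partial_{xx}\Phi+2\Phi\geq c\Phi$ there) and applying the maximum principle to $\psi_T\mp K\|h\|_{\mathcal{C}_\Phi}\Phi$.

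Once the uniform estimate is established, interior parabolic regularity and a diagonal extraction on compact subsets of $(-\infty,-T_0]\times\mathbb{R}$ produce a solution $\psi$ on the full domain satisfying \eqref{estfix**}; linearity of both the scheme and the estimate makes $A:h\mapsto\psi$ a linear operator. The principal obstacle is the first case of the contradiction argument: arranging the rescaling so that the orthogonality condition passes cleanly to the limit, and invoking the Liouville-type property for bounded ancient solutions of $\psi_t=L_0\psi$ orthogonal to $w'$. The fact that $\Phi$ is of order $1$ near each $\xi_j$ and decays exponentially (with rate $\sigma<\sqrt{2}$) in between is what makes the normalization $\psi_n/\Phi(t_n,x_n)$ yield a nontrivial but controlled limit in the first scenario and what supplies the supersolution in the second.
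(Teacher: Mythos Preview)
Your outline matches the paper's approach closely: solve on $[s,-T_0]$ with zero initial data, prove a uniform $\mathcal{C}_\Phi$ estimate by contradiction via blow-up near a layer and barriers away from layers, then let $s\to-\infty$. There is, however, a structural gap in how you split the two scenarios. The barrier $K\|h\|_{\mathcal{C}_\Phi}\Phi$ that you propose in the far-from-layer case is a supersolution only on the inter-layer set $\{|x-\xi_j|>R\}$; to apply the maximum principle there you need $|\psi_n|\le K\|h_n\|_{\mathcal{C}_\Phi}\Phi$ on the parabolic boundary, i.e.\ on $\{|x-\xi_j|=R\}$. But from $\|\psi_n\|_{\mathcal{C}_\Phi}=1$ you only know $|\psi_n|\le\Phi$ there, so the barrier does not dominate and the argument as written fails. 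The paper handles this by proving the near-layer case \emph{first} as a standalone assertion (their ``Assertion~1'': $\sup_{A_{j,R}}|\psi_n|/\Phi\to 0$), and then taking the barrier with constant $M\big(\|h_n/\Phi\|_\infty+\sup_j\mu_{n,j}\big)$, where $\mu_{n,j}$ is the near-layer sup; both terms tend to zero, yielding the contradiction. So your two cases are not independent alternatives but must be chained in this order.

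A second point: your Liouville step says the limit $\tilde\psi$ is a ``bounded ancient solution'' and then invokes the spectral gap of $L_0$ on $\{w'\}^\perp$. The coercivity argument the paper uses (their Proposition on $\int|\zeta'|^2-f'(w)\zeta^2\ge c\int\zeta^2$) requires $\tilde\psi(t,\cdot)\in L^2(\mathbb{R})$, which does not follow from mere $L^\infty$ boundedness. The paper inserts an intermediate step (their Step~3) establishing $|\tilde\psi(t,x)|\le Ce^{-\sigma|x|}$ by another barrier argument on the limit problem; only then can one set $a(t)=\int\tilde\psi^2\,dx$ and derive $a'(t)\le -c_0 a(t)$, forcing $a\equiv 0$. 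You should insert this decay step before the spectral argument.
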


The proof will be a consequence of intermediate steps that we state and prove next.
Let $g(t,x)\in \mathcal{C}_\xF\left((-\infty,-T)\times\mathbb{R}\right).$ For $T>0$ and $s<-T$ we consider the Cauchy problem
\bea\nonumber
\psi_t&=&\; \psi_{xx}+f'(z(t,x))\psi+g(t,x),\qquad \mathrm{in}\;\;\;(s,-T]\times\mathbb{R},\\
\qquad\qquad\qquad\psi(s,x)&=&0,\phantom{\psi_{xx}+f'(z)\psi+g(t,x)}\qquad \mathrm{in}\;\;\;\mathbb{R},\label{mainpro*}
\eea
which is uniquely solvable. We call $T^s(t,x)$ its solution.  By standard regularity theory we have $T^s\in C^{0,\xa}((s,-T)\times\mathbb{R}).$

\subsection{A priori estimates for the solution of the problem (\ref{mainpro*})}

 We will establish in this subsection a priori estimates for the
solutions  $T^s$ of (\ref{mainpro*}) that are independent on $s.$
\begin{lemma}

Let $T^s\in \mathcal{C}_\xF\left((s,-T)\times\mathbb{R}\right)$ be a solution of the problem (\ref{mainpro*})\\ and $g(t,x)\in \mathcal{C}_\xF\left((s,-T)\times\mathbb{R}\right),$ satisfies
\begin{align}\nonumber
&\int_{\mathbb{R}}g(t,x)w'(x-\xi_j(t))dx=-\int_{\mathbb{R}}\left( T^s_{xx}(t,x)+f'(z(t,x))T^s(t,x)\right) w'(x-\xi_j(t)) dx\\
&+\xi'_j(t)\int_{\mathbb{R}}T^s(t,x)w''(x-\xi_j(t))dx,\qquad\forall i=1,...,k,\;s<t<-T.\label{cond}
\end{align}

\hfill$\forall i=1,...,k,\;t<-T.$

Then there exists a uniform constant $T_0>0$ such that for any $t\in (s,-T_0],$ the following estimate is valid
\be
||T^s||_{\mathcal{C}_\xF((s,t)\times\mathbb{R})}\leq C||g||_{\mathcal{C}_\xF((s,t)\times\mathbb{R})},\label{estfix}
\ee
where $C>0$ is a uniform constant.\label{mainlem}
\end{lemma}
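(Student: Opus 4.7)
The estimate is linear, and the natural approach is a contradiction/blow-up argument. Suppose it fails. Then there exist sequences $s_n<t_n\leq -T_0$ (with $T_0$ as large as we wish), functions $g_n$ satisfying the compatibility condition \eqref{cond}, and the corresponding solutions $T^{s_n}$ of \eqref{mainpro*} with
$$||T^{s_n}||_{\mathcal{C}_\xF((s_n,t_n)\times\mathbb{R})}=1,\qquad ||g_n||_{\mathcal{C}_\xF((s_n,t_n)\times\mathbb{R})}\longrightarrow 0.$$
Condition \eqref{cond} is exactly the identity $\tfrac{d}{dt}\int T^{s_n}(t,x)w'(x-\xi_j(t))\,dx=0$, which combined with the vanishing initial data $T^{s_n}(s_n,\cdot)\equiv 0$ yields $\int T^{s_n}(t,x)w'(x-\xi_j(t))\,dx=0$ for every $j$ and every $t\in(s_n,t_n]$. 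Pick $(\tau_n,y_n)\in(s_n,t_n]\times\mathbb{R}$ with $|T^{s_n}(\tau_n,y_n)|\geq\tfrac12\xF(\tau_n,y_n)$ and set
$$\widetilde T_n(\sigma,y):=\frac{T^{s_n}(\tau_n+\sigma,y+y_n)}{\xF(\tau_n,y_n)}.$$

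Using the explicit exponential form of $\xF$ and the bound $|\xi_j'(t)|=O(|t|^{-1})$ coming from \eqref{pp}, the weight is quasi-invariant under bounded space-time translations, so $|\widetilde T_n(\sigma,y)|$ remains uniformly bounded on compact sets in $(\sigma,y)$. By standard parabolic regularity one extracts a subsequential locally uniform limit $T_\infty$ with $|T_\infty(0,0)|\geq 1/2$. If $\tau_n-s_n$ stays bounded along a subsequence then $T_\infty$ has vanishing initial trace and uniqueness for the Cauchy problem forces $T_\infty\equiv 0$, a direct contradiction; so assume $\tau_n-s_n\to\infty$, whence $T_\infty$ is defined on $\mathbb{R}\times(-\infty,0]$. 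Two subcases remain. \emph{Inner case:} if $y_n-\xi_j(\tau_n)$ is bounded for some (eventually fixed) index $j$, the fact that consecutive transitions separate like $\log|\tau_n|\to\infty$ shows that $f'(z(\tau_n+\sigma,y_n+y))$ converges locally uniformly to $f'(w(y+y_0))$ for some $y_0\in\mathbb{R}$; thus $T_\infty$ is a bounded ancient solution of $\partial_\sigma T_\infty=\partial_{yy}T_\infty+f'(w(y+y_0))T_\infty$ satisfying $\int T_\infty(\sigma,y)w'(y+y_0)\,dy=0$. \emph{Outer case:} if $|y_n-\xi_j(\tau_n)|\to\infty$ for every $j$, then $z\to\pm 1$ locally uniformly and $T_\infty$ solves $\partial_\sigma T_\infty=\partial_{yy}T_\infty-2T_\infty$ on $\mathbb{R}\times(-\infty,0]$.

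In the inner case, a Liouville-type argument based on the spectrum of $L:=-\partial_{yy}-f'(w)$ on $L^2(\mathbb{R})$, which equals $\{0\}\cup[\lambda_*,\infty)$ with $\ker L=\mathbb{R}w'$ and $\lambda_*>0$, shows that any bounded ancient solution of $\partial_\sigma T=-LT$ must lie in $\ker L$; the orthogonality condition then forces $T_\infty\equiv 0$. In the outer case the substitution $T_\infty=e^{-2\sigma}U$ reduces the equation to the heat equation for $U$, whose bounded ancient solutions are constants, so $T_\infty=Ce^{-2\sigma}$, and boundedness as $\sigma\to-\infty$ forces $C=0$. Each case contradicts $|T_\infty(0,0)|\geq 1/2$, which proves \eqref{estfix}. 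The principal technical obstacle is the Liouville-type step in the inner case: promoting a merely $L^\infty$-bounded ancient solution to a spectral decomposition requires care, traditionally handled by testing against compactly supported data and exploiting the backward-in-time expansion of the semigroup $e^{-L\sigma}$ on the complement of $\ker L$. This is also the role of the orthogonality condition, without which the translation mode $w'$ would persist in the limit and the argument would fail; and it is this step that dictates taking $T_0$ sufficiently large, so that the separation of the $\xi_j$ is wide enough to decouple the different transitions under the blow-up.
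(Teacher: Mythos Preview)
Your strategy---contradiction, blow-up at a near-supremum point, extraction of an ancient solution of the linearized equation, then a Liouville-type classification---is exactly the route the paper takes. The paper organizes it as: first prove an ``Assertion 1'' that the weighted sup over tubes $\{|x-\xi_j^0(t)|<R\}$ tends to zero (this is your inner case, handled by blow-up), and then dispatch the exterior by an explicit barrier built from $\Phi$; your outer-case Liouville for $\partial_\sigma T=\partial_{yy}T-2T$ is a legitimate alternative to that barrier step.

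There is, however, a genuine gap in your inner case, and you flag it yourself. After blow-up you only know $T_\infty\in L^\infty$, and you need $T_\infty(\sigma,\cdot)\in L^2(\mathbb{R})$ to run any spectral or energy argument against $L=-\partial_{yy}-f'(w)$. The paper closes this by proving an a priori decay $|T_\infty(\sigma,y)|\le Ce^{-\sigma|y|}$ (its Step~3), obtained from a barrier of the form $Me^{-\sigma|y|}+o(1)e^{\sigma|y|}$ applied to the pre-limit functions on the region where $f'(z)\le -(2-\varepsilon^2)$; this is not a formality and your sketch does not supply it. A second, related gap is the passage of the orthogonality $\int T^{s_n}w'(x-\xi_j)\,dx=0$ to the limit: this is an integral over all of $\mathbb{R}$, and locally uniform convergence alone does not justify it. The paper (its Step~2) controls the tails by splitting $\mathbb{R}$ into the $k$ pieces dictated by $\Phi$ and showing each far piece contributes $O(|t_i|^{-\delta})$; again this uses the explicit weight and is omitted in your outline. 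Once both points are in hand, the Liouville step is a one-line energy computation: multiply by $T_\infty$, integrate, and use the quadratic-form lower bound for $L$ on $(w')^\perp$ (the paper's Proposition~3.2) to get $a'(\sigma)\le -c_0 a(\sigma)$ for $a(\sigma)=\|T_\infty(\sigma,\cdot)\|_{L^2}^2$, which is incompatible with boundedness as $\sigma\to-\infty$ unless $a\equiv 0$.
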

\begin{proof}
We note here that the assumption (\ref{cond}) implies
\be
\int_{\mathbb{R}}T^s(t,x)w'(x-\xi_i(t))dx=0,\qquad\forall i=1,...,k,\;s<t<-T. \label{condpsi}
\ee
Indeed since $T^s$ is the solution of (\ref{mainpro*}), using $w'(x-\xi_j(t))$ for test function, we have for any $t\in (s,-T]$
\begin{align*}
\int_{\mathbb{R}}T^s_tw'(x-\xi_j(t))dx=-\;\int_{\mathbb{R}}T_{x}^sw''(x-\xi_j(t))dx&+\int_{\mathbb{R}}f'(z(t,x))T^s w'(x-\xi_j(t))dx\\
&+\int_{\mathbb{R}}g(t,x)dx.
\end{align*}
But by \eqref{cond} we have $\forall\; t\in (s,-T]$
\begin{align*}
&\int_{\mathbb{R}}g(t,x)w'(x-\xi_j(t))dx=\int_{\mathbb{R}}T^s_x(t,x)w''(x-\xi_j(t))dx\\
&-\int_{\mathbb{R}}f'(z(t,x))T^s(t,x)w'(x-\xi_j(t)) dx+
\xi'_j(t)\int_{\mathbb{R}}T^s(t,x)w''(x-\xi_j(t))dx.
\end{align*}
Thus combining all above we have that
\begin{align*}
\int_{\mathbb{R}}T^s_tw'(x-\xi_j(t))dx&=\xi'_j(t)\int_{\mathbb{R}}T^s(t,x)w''(x-\xi_j(t))dx \Leftrightarrow\\
\frac{d}{dt}\int_{\mathbb{R}}T^s w'(x-\xi_j(t))dx&=0\Leftrightarrow
\int_{\mathbb{R}}T^s w'(x-\xi_j(t))dx=c.
\end{align*}
Using the fact that $T^s(s,x)=0$ by above equality we deduce that $T^s$ satisfies the orthogonality condition \eqref{condpsi}.

Set
$$A_j^{(s,t)}=\left\{(\tau,x)\in(s,t)\times\mathbb{R}:\;\frac{\xi_j^0(\tau)+\xi_{j-1}^0(\tau)}{2}< x< \frac{\xi_j^0(\tau)+\xi_{j+1}^0(\tau)}{2}\right\},$$
with $\xi_0^0=-\infty,$ $\xi_{k+1}^0=\infty$ and
$$A_{j,R}^{(s,t)}=\left\{(\tau,x)\in(s,t)\times\mathbb{R}:\;|x-\xi_j^0(\tau)|<R+1\right\}.$$

We will prove \eqref{estfix} by contradiction. Let $\{s_i\},\;\{\overline{t}_i\}$ be sequences such that $s_i< \overline{t}_i\leq -T_0,$ and
$s_i\downarrow-
\infty,$ $\overline{t}_i\downarrow-\infty.$ We assume that there exists $g_i$ such that $g_i$ satisfies (\ref{cond}) and $\psi_i$  solves (\ref{mainpro*}) with $s=s_i,$ $-T=\overline{t}_i$ and $g=g_i.$ Finally we assume that
\be
\left|\left|\frac{\psi_i}{\xF}\right|\right|_{L^\infty((s_i,\overline{t}_i)\times\mathbb{R})}=1,\label{contr}
\ee
$$\left|\left|\frac{g_i}{\xF}\right|\right|_{L^\infty((s_i,\overline{t}_i)\times\mathbb{R})}\rightarrow0,$$

First we note that we can assume
$$s_i+1<\overline{t}_i.$$
Indeed let $\xl>0$ then the function $\psi_i=e^{\xl (t-s_i)}v_i(t,x)$ satisfies
\begin{align}\nonumber
&v_t=\; v_{xx}+(-\xl+f'(z(t,x)))v+e^{-\xl (t-s_i)}g(t,x),\quad \mathrm{in}\;\;\;(s,-T]\times\mathbb{R},\\
 &v(s,x)=0,\phantom{\psi_{xx}+f'(z)\psi+g(t,x)}\qquad \mathrm{in}\;\;\;\mathbb{R} \label{3333}.
\end{align}
Let $M>0$ be large enough. Set
$$\xf_j(t,x)=M\left(e^{\xs\left(-x+\xi_{j+1}^0(t)\right)}+e^{\xs\left(x-\xi_{j-1}^0(t)\right)}\right),$$
next observe that there exists $C>0$ independent of $t$ such that
\be
\xF(t,x)\leq C \xf_j(t,x),\quad\forall (t,x)\in(-\infty,-1)\times\mathbb{R}.\label{estPhi}
\ee
Now since $|f'(z(t,x)))|\leq C_0$ where $C_0$ does not depend on $t,$
we can choose $\xl>2C_0$ independent of $t$ such that the function $\xf_j$
satisfies for any $(t,x)\in (s,-1]\times\mathbb{R}$
$$(\xf_j)_t- (\xf_j)_{xx}+(\xl-f'(z(t,x)))\xf_j\geq c_1\xf_j(t,x)\geq c_2M\xF(t,x)\geq e^{-\xl (t-s_i)}g_i(t,x) ,$$
where $c_1, c_2>0$ are independent of $t$ and
$$M=\frac{1}{c_2}\left|\left|\frac{g_i}{\xF}\right|\right|_{L^\infty((s_i,\overline{t}_i)\times\mathbb{R})}.$$
Thus we can use $\xf_j$ like barrier to obtain
\be|v_i(t,x)|\leq \xf_j(t,x)\Rightarrow |\psi_i(t,x)|\leq C e^{\xl (t-s_i)}\xF(t,x)\left|\left|\frac{g_i}{\xF}\right|\right|_{L^\infty((s_i,\overline{t}_i)\times\mathbb{R})}.\label{4444}\ee
Thus by the above inequality we can choose $s_i+1<\overline{t}_i.$

To reach at contradiction we need the following assertion,

\textbf{Assertion 1.} \emph{Let $R>0$ then we have}
\be
\lim_{i\rightarrow\infty}\left|\left|\frac{\psi_i}{\xF}\right|\right|_{L^\infty(A_{j,R}^{(s_i,\overline{t}_i)})}=0,\;\;\;\;\;\foral j=1,...k.\label{contr2}
\ee
Let us first assume that \eqref{contr2} is valid. Set
$$
\xm_{i,j}:=\left|\left|\frac{\psi_i}{\xF}\right|\right|_{L^\infty(A_{j,R}^{(s_i,\overline{t}_i)})}\longrightarrow_{i\rightarrow\infty}0,\;\;\;\;\;\forall j=1,...k,
$$
and let
 $$\frac{\xi_j(t)+\xi_{j-1}(t)}{2}\leq x\leq \frac{\xi_j(t)+\xi_{j+1}(t)}{2},\;j=1,...,k,$$
with $\xi_0=-\infty$ and $\xi_{k+1}=\infty.$

If $n\leq j-1,$ then we have by our assumptions on $\xi_n$

$$|w(x-\xi_n(t))-1|\leq Ce^{\sqrt{2}(-x+\xi_{n-1}(t))}\leq Ce^{-\frac{\sqrt{2}}{2}(\xi_{j}-\xi_{j-1}(t))} \leq \frac{C}{\sqrt{t}}.$$
Similarly if $n\geq j+1$

$$|w(x-\xi_n(t))+1|\leq2e^{\sqrt{2}(x-\xi_{n+1}(t))}\leq \frac{C}{\sqrt{t}}.$$
Moreover if we assume that $|x-\xi_j(t)|>R+1,$ then we have that
$$|w(x-\xi_j(t))|\geq w(R).$$
Combining all above
for any $0<\xe<\sqrt{2}$ there exists $i_0\in \mathbb{N}$ and $R>0$ such that

\be
-f'(z(t,x))\geq 2-\xe^2,\;\;\forall t\leq \overline{t}_i,\;\;x\in \mathbb{R}\setminus\cup_{j=1}^kA_{j,R}^{(s_i,t_i)}\;\;\; \text{and}\;\;\;i\geq i_0.\label{fr11}
\ee

Consider the function

$$\overline{\xf}_{i,j}(t,x)=M\left(e^{\xs\left(-x+\xi_{j+1}^0(t)\right)}+e^{\xs\left(x-\xi_{j-1}^0(t)\right)}\right)
\left(\left|\left|\frac{g_i}{\xF}\right|\right|_{L^\infty((s_i,\overline{t}_i)\times\mathbb{R})}
+\sup_{1\leq j\leq k}\xm_{i,j}\right),$$
where $M>1$  is large enough which does not depend on $s_i, \overline{t}_i.$

Let $\xe>0$ be such that $2-\xe^2>\xs^2.$ Then we can choose $i_0$ such that for any $i> i_0$ and
$\forall\; (t,x)\in((s_i,t_i)\times\mathbb{R})\setminus\cup_{j=1}^kA_{j,R}^{(s_i,\overline{t}_i)},$ the function $\overline{\xf}_{i,j}(t,x)$ satisfies
\be(\overline{\xf}_{i,j})_t- (\overline{\xf}_{i,j})_{xx}-f'(z(t,x))\overline{\xf}_{i,j}\geq c_1\overline{\xf}_{i,j}\geq c_2M\xF(t,x)\geq g_i(t,x), \label{5555}\ee
where the constants $0<c_2<c_1<1$ are independent of $t,$ $M\geq\frac{1}{c_2}$ and we have used \eqref{estPhi}.

Let $0\leq\eta\leq1$ be a smooth function in $C_0^\infty\mathbb{R}$ such that $\eta=1$ if $|x|<1$ and $\eta=2$ if $|x|>2.$
Set $\xz=\eta^2 (\frac{t}{R})\max(\psi_i(t,x)-\xf_{i,j}(t,x),0).$

Note that by \eqref{4444} we can choose $M>0$ such that
$$\max(\psi_i(t,x)-\xf_{i,j}(t,x),0)=0,\quad\forall\; (t,x)\in \overline{\cup_{j=1}^kA_{j,R}^{(s_i,\overline{t}_i)}},$$
thus by \eqref{fr11} and \eqref{5555} we can easily obtain

\begin{align*}\int_{s_i}^{\overline{t}_i}\int_{\mathbb{R}}(\psi_i-\xf_{i,j})_t\xz dxdt&+\int_{s_i}^{\overline{t}_i}\int_{\mathbb{R}}(\psi_i-\xf_{i,j})_{x}\xz_x dxdt\\
&+(2-\xe^2)\int_{s_i}^{\overline{t}_i}\int_{\mathbb{R}}(\psi_i-\xf_{i,j})\xz dxdt\leq0.
\end{align*}

By the last inequality and by standard arguments we obtain
$$|\psi_i(t,x)|\leq |\overline{\xf}_{i,j}(t,x)|,\;\;\forall(t,x)\in ((s_i,\overline{t}_i)\times\mathbb{R})\setminus\cup_{j=1}^kA_{j,R}^{(s_i,\overline{t}_i)},\;\;j=1,...k,\;\;\;i\geq i_0.$$
Thus we have
$$|\psi_i(t,x)|\leq |\overline{\xf}_{i,j}(t,x)|,\;\;\forall(t,x)\in (s_i,\overline{t}_i)\times\mathbb{R},\;\;j=1,...k.$$
Hence by \eqref{3333} we can easily obtain that
$$1=\left|\left|\frac{\psi_i}{\xF}\right|\right|_{L^\infty((s_i,\overline{t}_i)\times\mathbb{R})}\leq
M\left(\left|\left|\frac{g_i}{\xF}\right|\right|_{L^\infty((s_i,\overline{t}_i)\times\mathbb{R})}
+\sup_{1\leq j\leq k}\xm_{i,j}\right),$$
which is clearly a contradiction if we choose $i$ large enough.

\noindent\textbf{Proof of Assertion 1.}
We will prove Assertion 1 by contradiction in four steps.

Let us give first the contradict argument and some notations.  We assume that (\ref{contr2}) is not valid. Then there exists $j\in\{1,...,k\}$ and $\xd>0$ such that
$$\left|\left|\frac{\psi_i}{\xF}\right|\right|_{L^\infty(A_{j,R}^{(s_i,\overline{t}_i)})}>\xd>0,\;\;\forall i\in\mathbb{N}.$$
Let $(t_i,y_i)\in A_{j,R}^{(s_i,\overline{t}_i)}$ such that
\be
\left|\frac{\psi_i(t_i,y_i)}{\xF(t_i,y_i)}\right|>\xd.\label{667}
\ee

We observe here that by definition of $\xF$
\be
\xF(t_i,y_i)=e^{\xs(-y_i+\xi_{j-1}(t_i))}+e^{\xs(y_i-\xi_{j+1}(t_i))}.\label{obs1}
\ee

We set $y=x+\xi_j(t+t_i),\;\;y_i=x_i+\xi_j(t_i)$ and
$$\xf_i(t,x)=\frac{\psi_i(t+t_i,x+x_i+\xi_j(t+t_i))}{\xF(t_i,x_i+\xi_j(t_i))}.$$

Then $\xf_i$ satisfies
\bea\nonumber
(\xf_i)_t&=&\; (\xf_i)_{xx}-\xi_j'(t+t_i)(\xf_i)_x+f'(z(t+t_i,x+x_i+\xi_j(t+t_i)))\xf_i\\ \nonumber&+&\frac{g_i(t+t_i,x+x_i+\xi_j(t+t_i))}{\xF(t_i,x_i+\xi_j(t_i))}, \quad \mathrm{in}\;\;\;(s_i-t_i,0]\times\mathbb{R},\\
\xf_i(s_i-t_i,x)&=&0,\phantom{\psi_{xx}+f'(z)\psi+g(t,x)}\qquad \mathrm{in}\;\;\;\mathbb{R}.\label{eq11}
\eea

Also set
\begin{align}
\nonumber
&B_{t_i,n,j}=\Bigg\{(t,x)\in(s_i-t_i,0]\times\mathbb{R}:
\;\frac{\xi_n^0(t+t_i)+\xi_{n-1}^0(t+t_i)}{2}-\xi_j(t+t_i)-x_i\\ \nonumber
&\leq x\leq
 \frac{\xi_n^0(t+t_i)+\xi_{n+1}^0(t+t_i)}{2}-\xi_j(t+t_i)-x_i\Bigg\}
\end{align}
and
\begin{align*}
B_{t_i,n,j}^M=B_{t_i,n,j}\cap\left\{(t,x)\in(s_i-t_i,0]\times\mathbb{R}: |x+\xi_j(t+t_i)+x_i-\xi_n^0(t+t_i)|>M\right\},
\end{align*}
where $n=1,....,k$ and $M>0.$
We note here that $|x_i|<R+1, \;\forall\; i\in\mathbb{N},$ $|\xf_i(0,0)|=\left|\psi_i(t_i,y_i)/\xF(t_i,y_i)\right|>\xd>0.$
Also in view of the proof of \eqref{4444} and the assumption \eqref{667} we can assume that
$$\liminf t_i-s_i>\infty.$$
Without loss of generality we assume that $x_i\rightarrow x_0\in B_{R+1}(0),$ $\lim_{i\rightarrow\infty} t_i-s_i=\infty$ (otherwise take a subsequence).

\medskip
\noindent\textbf{Step 1}

We assert that
$\xf_i\rightarrow\xf$ locally uniformly, $\xf(0,0)>\xd$  and $\xf$ satisfies
\be
\xf_t=\; \xf_{xx} +f'(w(x+x_0))\xf,\qquad \mathrm{in}\;\;\;(-\infty,0]\times\mathbb{R}.\label{equ1}
\ee

Let $(t,x)\in B_{t_i,n,j},\;1\leq n\leq k.$ By \eqref{contr} and \eqref{obs1} we have that
\begin{align}\nonumber
&|\xf_i(t,x)|\leq \left|\frac{\psi_i(t+t_i,x+x_i+\xi_j(t+t_i))}{\xF(t_i,x_i+\xi_j(t_i))}\right|\\ \nonumber
&\leq \left|\frac{\xF(t+t_i,x+x_i+\xi_j(t+t_i))}{\xF(t_i,x_i+\xi_j(t_i))}\right|\\ \nonumber
&= \frac{e^{\xs(-(x+x_i+\xi_j(t+t_i))+\xi_{n-1}(t+t_i))}+
e^{\xs(x+x_i+\xi_j(t+t_i)-\xi_{n+1}(t+t_i))}}{e^{\xs(-(x_i+\xi_j(t_i))+\xi_{j-1}(t_i))}+e^{\xs(x_i+\xi_j(t_i)-\xi_{j+1}(t_i))}}\\ \nonumber
&\leq e^{\xs(-(x+x_i+\xi_j(t+t_i)-\xi_{n}(t+t_i))-(\xi_{n}(t+t_i)-\xi_{n-1}(t+t_i))+x_i+(\xi_j(t_i))-\xi_{j-1}(t_i))}\\ \nonumber
&+e^{\xs((x+x_i+\xi_j(t+t_i)-\xi_{n}(t+t_i))+(\xi_{n}(t+t_i)-\xi_{n+1}(t+t_i))+x_i+(\xi_{j+1}(t_i))-\xi_{j}(t_i))}\\ \nonumber
&\leq C_0(\xb,||h||_{L^\infty},\sup_{1\leq j\leq k}|\xg_j|,\xs,R)\\ &\times\left(\frac{t_i}{t+t_i}\right)^{\frac{\xs}{\sqrt{2}}}e^{\xs|x+\xi_j(t+t_i)-\xi_n(t+t_i)|},\quad\forall i\in\mathbb{N},\;(t,x)\in B_{t_i,n,j},
\label{constant}
\end{align}
where in the last inequality we have used \eqref{ksij}.

Now note here that $$\cup_{i=1}^\infty B_{t_i,j,j}=(-\infty,0]\times\mathbb{R}.$$

Thus the proof of the assertion of this step is complete.

\medskip
\noindent\textbf{Step 2} In this step we prove the following orthogonality condition for $\xf$

\be
\int_{\mathbb{R}}\xf(t,x)w'(x+x_0)dx=0,\;\;\;\forall t\in (-\infty,0].\label{oth}
\ee

Let $t\in \cap_{i=i_0}^\infty(s_i-t_i,0],$ for some $i_0\in\mathbb{N},$ and

\begin{align}
\nonumber
x\in &B_{t,t_i,n,j}=\Bigg\{x\in\mathbb{R}:
\;\frac{\xi_n^0(t+t_i)+\xi_{n-1}^0(t+t_i)}{2}-\xi_j(t+t_i)-x_i\\ \nonumber
&\leq x\leq
 \frac{\xi_n^0(t+t_i)+\xi_{n+1}^0(t+t_i)}{2}-\xi_j(t+t_i)-x_i\Bigg\}.
\end{align}
By (\ref{constant}) we have that
\be
\left|\int_{B_{t,t_i,j,j}}\xf_i(t,x)w'(x+x_i)dx\right|\leq C_0\int_{\mathbb{R}}e^{-(\sqrt{2}-\xs)|x|}dx< C.\label{est2}
\ee

Let $n>j,$ then there exists $i_0$ such that for any $i>i_0$ we have $x>0.$
Also by \eqref{constant}, the assumptions on $\xi$ (see Notation \ref{notation}) and the fact that $|x|<R+1$ we have that

\begin{align}\nonumber
&\left|\int_{B_{t,t_i,n,j}}\xf_i(t,x)w'(x+x_i)dx\right|\\ \nonumber
&\leq C_0 \int_{{\frac{\xi_n^0(t+t_i)+\xi_{n-1}^0(t+t_i)}{2}-\xi_j(t+t_i)-x_i}}^{\frac{\xi_n^0(t+t_i)+
\xi_{n+1}^0(t+t_i)}{2}-\xi_j(t+t_i)-x_i}e^{-\sqrt{2}x+\xs|x+\xi_j(t+t_i)-\xi_n(t+t_i)|}dx\\ \nonumber
&= C_0 e^{-\sqrt{2}(\xi_n(t+t_i)-\xi_j(t+t_i))} \int_{{\frac{\xi_n^0(t+t_i)+\xi_{n-1}^0(t+t_i)}{2}-\xi_n(t+t_i)-x_i}}^{\frac{\xi_n^0(t+t_i)+
\xi_{n+1}^0(t+t_i)}{2}-\xi_n(t+t_i)-x_i}e^{-\sqrt{2}y+\xs|y|}dy.
\end{align}
Now
\begin{align*}
&\int_{{\frac{\xi_n^0(t+t_i)+\xi_{n-1}^0(t+t_i)}{2}-\xi_n(t+t_i)-x_i}}^{\frac{\xi_n^0(t+t_i)+
\xi_{n+1}^0(t+t_i)}{2}-\xi_n(t+t_i)-x_i}e^{-\sqrt{2}y+\xs|y|}dy\\
&=\int_{0}^{\frac{\xi_n^0(t+t_i)+
\xi_{n+1}^0(t+t_i)}{2}-\xi_n(t+t_i)-x_i}e^{-\sqrt{2}y+\xs|y|}dy\\
&+\int_{{\frac{\xi_n^0(t+t_i)+\xi_{n-1}^0(t+t_i)}{2}-\xi_n(t+t_i)-x_i}}^{0}e^{-\sqrt{2}y+\xs|y|}dy\\
&\leq C \left(e^{-\frac{\sqrt{2}-\xs}{2}(\xi_{n+1}^0(t+t_i)-\xi_{n}^0(t+t_i))}+e^{\frac{\sqrt{2}+\xs}{2}(\xi_{n}^0(t+t_i)-\xi_{n-1}^0(t+t_i))}\right) +C_1\\
&\leq C\left(e^{-\frac{\sqrt{2}-\xs}{2\sqrt{2}}(\log(-2\sqrt{2}\xb (t+ t_i))}+e^{\frac{\sqrt{2}+\xs}{2\sqrt{2}}(\log(-2\sqrt{2}\xb (t+ t_i)))}\right)+C_1.
\end{align*}
But
$$e^{-\sqrt{2}(\xi_n(t+t_i)-\xi_j(t+t_i))}\leq Ce^{-\sqrt{2}(\xi_n(t+t_i)-\xi_{n-1}(t+t_i))}\leq Ce^{-\log(-2\sqrt{2}\xb (t+ t_i))}.$$
Thus combining all above we have
\be
\left|\int_{B_{t,t_i,n,j}}\xf_i(t,x)w'(x+x_i)dx\right|\leq C e^{-\frac{(\sqrt{2}-\xs)}{2\sqrt{2}}\log(-2\sqrt{2}\xb (t+ t_i))}\rightarrow_{i\rightarrow\infty}0.\label{est1}
\ee
Similarly the estimate \eqref{est1} is valid if $n<j.$
Now note that

\bea\nonumber
&&\int_{\mathbb{R}}\xf_i(t,x)w'(x+x_i)dx=\int_{\mathbb{R}}\frac{\psi_i(t+t_i,x+x_i+\xi_j(t+t_i))}{\xF(t_i,x_i+\xi_j(t_i))}w'(x+x_i)dx\\
&=&\frac{1}{\xF(t_i,x_i+\xi_j(t_i))}\int_{\mathbb{R}}\psi_i(t+t_i,x)w'(x-\xi_j(t+t_i))dx=0.\label{3laloun}
\eea
By (\ref{est1}), (\ref{est2}) and \eqref{3laloun} we have that
$$0=\int_{\mathbb{R}}\xf_i(t,x)w'(x+x_i)dx\rightarrow\int_{\mathbb{R}}\xf(t,x)w'(x+x_0)dx$$
and the proof of this assertion follows.

\medskip
\noindent\textbf{Step 3} In this step we prove the following assertion:

 \emph{There exists $C=C(R,\xs)>0,$ such that}

\be
|\xf(t,x)|\leq Ce^{-\xs|x|},\;\;\forall(t,x)\in(-\infty,0]\times \mathbb{R}.\label{bound1}
\ee

Now, note that if $(t,x)\in B_{t_i,n,j},$ by definition of $\xi$ (Notation \ref{notation}), we have
$$
e^{\xs|x+\xi_j(t+t_i)-\xi_n(t+t_i)|}\leq C_0(\xb,||h||_{L^\infty},\sup_{1\leq j\leq k}|\xg_j|,\xs,R) e^{\xs|x|}.
$$
Thus, in view of the proof of \eqref{constant} we have that
$$
\left|\frac{g_i(t+t_i,x+x_i+\xi_j(t+t_i))}{\xF(t_i,x_i+\xi_j(t_i))}\right|\leq C\left|\left|\frac{g_i}{\xF}\right|\right|_{L^\infty((s_i,\overline{t}_i)\times\mathbb{R})}
e^{\xs|x|},\;\;\forall i\in\mathbb{N}.
$$
In view of the proof of Asertion 1 we can find $i_0$ and $M>0$ such that we use $$G(t,x)=M\left(e^{-\xs|x|}+\left|\left|\frac{g_i}{\xF}\right|\right|_{L^\infty((s_i,\overline{t}_i)\times\mathbb{R})}
e^{\xs|x|}\right),$$
as barrier, to prove
$$|\xf_i(t,x)|\leq G(t,x),\quad\forall (t,x)\in B_{t_i,j,j}^M\;\;\forall i\geq i_0.$$
And the proof of \eqref{bound1} follows if we send $i\rightarrow\infty.$

\medskip
\noindent\textbf{ Step 4}
Here  we prove the assertion \eqref{contr2}.

If we multiply (\ref{equ1}) by $\xf$ and integrate with respect $x$ we have by Proposition \ref{otinanai}
\bea\nonumber
0&=&\frac{1}{2}\int_{\mathbb{R}}(\xf^2)_tdx+\int_{\mathbb{R}}|\xf_x|^2-f'(w(x))|\xf|^2dx\\ \nonumber
&\geq&\frac{1}{2}\int_{\mathbb{R}}(\xf^2)_tdx+c\int_{\mathbb{R}}|\xf(t,x)|^2dx.
\eea

Set $a(t)=\int_{\mathbb{R}}|\xf(t,x)|^2dx,$ we have that there exists a $c_0$ such that
$$a'(t)\leq-c_0a(t)\Rightarrow a(t)>a(0)e^{c_0|t|},$$
which is a contradiction since
$$
||e^{\xs|x|}\xf||_{L^\infty((-\infty,-T_0-\widetilde{t}_0)\times\mathbb{R})}<C.
$$

\end{proof}
The following Proposition is well known, we give a proof for the convenience of the reader.
\begin{proposition}
Consider the Hilbert space $$H=\{\xz\in H^1(\mathbb{R}):\;\int_{\mathbb{R}}\xz(x)w'(x)dx=0\}.$$
Then the following inequality is valid
\be
\int_{\mathbb{R}}|\xz'(x)|^2-f'(w(x))|\xz|^2dx\geq c\int_{\mathbb{R}}|\xz(x)|^2dx, \qquad\forall\xz\in H\cap L^2(\mathbb{R}).\label{Heq}
\ee\label{otinanai}
\end{proposition}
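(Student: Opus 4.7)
The plan is to recognize the quadratic form
$Q(\zeta) := \int_{\mathbb{R}} |\zeta'|^2 - f'(w)|\zeta|^2\,dx$
as the form of the self-adjoint Schrödinger operator $L := -\partial_x^2 + (3w^2 - 1)$ acting on $L^2(\mathbb{R})$, and to establish a spectral gap above the zero eigenfunction $w'$.

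First, I would verify that $w'$ lies in the kernel of $L$: differentiating the ODE $w'' + f(w) = 0$ in $x$ yields $w''' + f'(w)w' = 0$, i.e.\ $Lw' = 0$. Since $w(x)=\tanh(x/\sqrt{2})$ is strictly increasing, $w'>0$ everywhere, and a classical theorem on Schrödinger operators on the line tells us that any strictly positive $L^2$-eigenfunction is the (simple) ground state. Hence $0 = \min \sigma(L)$ and the corresponding eigenspace is exactly $\mathrm{span}\{w'\}$.

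Next, I would identify the essential spectrum. Because $w(x)\to \pm 1$ exponentially fast as $x\to\pm\infty$, the potential $3w^2-1$ tends to $2$ at both ends, so $L$ is a compact perturbation of $-\partial_x^2 + 2$. Weyl's theorem then gives $\sigma_{\mathrm{ess}}(L) = [2,\infty)$, and consequently the spectrum below $2$ consists of at most finitely many isolated eigenvalues of finite multiplicity, the lowest being the simple eigenvalue $0$.

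Let $\lambda_1$ denote the second eigenvalue of $L$ if one exists in $(0,2)$, and $\lambda_1 := 2$ otherwise; in either case $\lambda_1 > 0$ strictly, by simplicity of the ground state together with $\inf \sigma_{\mathrm{ess}}(L) = 2$. The Courant--Fischer min--max principle then gives
\[
\lambda_1 \;=\; \inf\left\{ \frac{Q(\zeta)}{\|\zeta\|_{L^2}^2}\; :\; \zeta \in H^1(\mathbb{R})\setminus\{0\},\ \int_{\mathbb{R}} \zeta\, w'\,dx = 0\right\},
\]
which is precisely \eqref{Heq} with $c = \lambda_1$. The only delicate point in the argument is confirming the strict positivity of this infimum, i.e.\ ruling out that the min--max value coincides with $0$; but this is exactly what simplicity of the ground state combined with $\sigma_{\mathrm{ess}}(L)\subset [2,\infty)$ provides, so the proof reduces to invoking standard one-dimensional Schrödinger operator theory.
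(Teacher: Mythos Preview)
Your argument is correct, but it takes a different route from the paper. The paper proceeds in two elementary steps: first it shows $Q(\zeta)\ge 0$ on $H$ by the substitution $\zeta=w'\varphi$, which after an integration by parts collapses the form to $\int_{\mathbb{R}}|w'|^2|\varphi'|^2\,dx\ge 0$, with equality forcing $\zeta\in\mathrm{span}\{w'\}$ and hence $\zeta=0$ in $H$. Then it upgrades nonnegativity to a strict gap by a compactness-by-contradiction argument: a minimizing sequence $\phi_n$ with $\|\phi_n\|_{L^2}=1$ and $Q(\phi_n)\to 0$ converges weakly in $H^1$ to some $\phi\in H$ with $Q(\phi)=0$, hence $\phi=0$; but rewriting $Q(\phi_n)\le 1/n$ as $2=2\int\phi_n^2\le 1/n+3\int(1-w^2)\phi_n^2$ and using that $(1-w^2)$ decays at infinity (so this integral is compact and tends to $0$) yields a contradiction.

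Your spectral-theoretic approach is more conceptual and has the advantage of identifying the optimal constant $c$ as the second eigenvalue (or the bottom of the essential spectrum) of $L=-\partial_x^2+3w^2-1$. The paper's approach is more self-contained: it avoids invoking Weyl's theorem and the Perron--Frobenius/Sturm characterization of the ground state, at the cost of a slightly ad hoc contradiction step and no information on the value of $c$. Both are standard for this linearized Allen--Cahn operator; your version would be the one most readers expect.
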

\begin{proof}
Let $\xz\in H.$ Set $\xz=w'\xf.$ Then
\begin{align*}
\int_{\mathbb{R}}|\xz'(x)|^2-f'(w(x))|\xz|^2dx&=\int_{\mathbb{R}}|w''|^2|\xf|^2dx+\int_{\mathbb{R}}|w'|^2|\xf'(x)|^2\\
&+\int_{\mathbb{R}}w''w'(\xf^2)'dx-\int_{\mathbb{R}}f'(w(x))|w'\xf|^2dx\\
&=\int_{\mathbb{R}}|w'|^2|\xf'(x)|^2\geq0.
\end{align*}
Thus

$$\int_{\mathbb{R}}|\xz'(x)|^2-f'(w(x))|\xz|^2dx=0\quad\text{if and only if}\quad \xz=cw',$$
for some constant $c,$ which implies that $\xz=0.$

Now we assume that there exists a sequence $\{\xf_n\}_{n=1}^\infty\in H$ such that
$$\int_{\mathbb{R}}\xf_n^2dx=1$$
and
\be
\int_{\mathbb{R}}|\xf_n'(x)|^2-f'(w(x))|\xf_n|^2dx\leq \frac{1}{n}.\label{34}
\ee

Thus $\xf_n\rightharpoonup \xf$ in $H$ and $\xf_n\rightharpoonup \xf$ in $L^2(K)$ for any compact subset of $\mathbb{R}.$ Which implies

$$0=\int_{\mathbb{R}}\xf_n(x)w'dx\rightarrow\int_{\mathbb{R}}\xf w'dx=0,$$
$\xf\in H$
and
$$
\int_{\mathbb{R}}|\xf'(x)|^2-f'(w(x))|\xf|^2dx=0.
$$
Thus $\xf=0.$

But by \eqref{34} we have
$$2=2\int_{\mathbb{R}}|\xf|^2dx\leq 3\int_{\mathbb{R}}(1-w^2)|\xf|^2dx,$$
which implies that $\xf\neq0,$ which is clearly a contradiction

\end{proof}
\subsection{The problem \eqref{mainpro*} with $g(t,x)=h(t,x)-\sum_{j=1}^k c_i(t)w'(x-\xi_j(t))$}
In this subsection, we study the following problem.

\begin{align}\nonumber
\psi_t&= \psi_{xx}+f'(z(t,x))\psi+h(t,x)-\sum_{j=1}^k c_i(t)w'(x-\xi_j(t)),\quad \mathrm{in}\;\;\;(s,-T_0]\times\mathbb{R},\\
\psi(s,x)&=0,\phantom{\xD\psi+f'(z)\psi+f(t,x)-\;\;\; c_i(t)w'(x-\xi_j(t))}\quad \mathrm{in}\;\;\;\mathbb{R},\label{proci}
\end{align}
where $h\in\mathcal{C}_\xF((s,-T)\times\mathbb{R})$ and $c_i(t)$ satisfies the following (nearly diagonal) system
\begin{align}\nonumber
&\sum_{i=1}^kc_i(t)\int_{\mathbb{R}}w'(x-\xi_i(t))w'(x-\xi_j(t))dx\\ \nonumber
&=\int_{\mathbb{R}}\left( \psi_{xx}(t,x) w'(x-\xi_j(t))+f'(z(t,x)\psi(t,x)\right) dx\\ \nonumber
&-\xi'_j(t)\int_{\mathbb{R}}\psi(t,x)w''(x-\xi_j(t))dx\\
&+\int_{\mathbb{R}}h(t,x)w'(x-\xi_j(t))dx,\qquad\forall i=1,...,k,\;t<-T.\label{ci(t)}
\end{align}
We note here that if $\psi$ is a solution of \eqref{proci} and $c_i(t)$ satisfies the above system then $g(t,x)=h(t,x)-\sum_{j=1}^k c_i(t)w'(x-\xi_j(t))$ satisfies \eqref{cond}. Thus in view of the proof of \eqref{condpsi} we have that $\psi$ satisfies the orthogonality conditions $$\int_{\mathbb{R}}\psi(t,x)w'(x-\xi_i(t))dx=0,\quad\forall i=1,...,k,\;s<t<-T_0.$$

\medskip
The main result of this subsection is the following
\begin{lemma}
Let $h\in\mathcal{C}_\xF((s,-T)\times\mathbb{R})$. Then there exist a uniform constant $T_0\geq T>0,$ and a unique solution  $T^s$ of the problem \eqref{proci}.

Furthermore, we have that $T^s$ satisfies the orthogonality conditions (\ref{orthcond}), $\forall s<t<-T_0,$ and the following estimate
 \be
||T^s||_{\mathcal{C}_\xF((s,t)\times\mathbb{R})}\leq C||h||_{\mathcal{C}_\xF((s,t)\times\mathbb{R})},\label{estfixmef}
\ee
where $C>0$ is a uniform constant.\label{cilemma*}
\end{lemma}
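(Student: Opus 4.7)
The plan is to construct $T^s$ as the unique classical solution of the coupled problem consisting of \eqref{proci} together with the algebraic system \eqref{ci(t)}, verify the orthogonality conditions \eqref{orthcond1}, and then deduce \eqref{estfixmef} by combining Lemma \ref{mainlem} with a separate estimate on the multipliers $c_i(t)$. For $T_0$ sufficiently large the Gram matrix $M_{ij}(t)=\int_{\mathbb{R}}w'(x-\xi_i(t))w'(x-\xi_j(t))\,dx$ is a small perturbation of $\|w'\|_{L^2}^2\,I_k$ (the off-diagonal entries are exponentially small in $|\xi_i-\xi_j|\sim\frac{1}{\sqrt 2}\log|t|$) and hence invertible; solving \eqref{ci(t)} expresses each $c_i(t)$ as an explicit linear functional of $(T^s(t,\cdot),T^s_{xx}(t,\cdot),h(t,\cdot))$. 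Substituting back into \eqref{proci} yields a linear parabolic equation for $T^s$ alone with nonlocal-in-space zeroth-order terms, which admits a unique classical solution with $T^s(s,\cdot)=0$ by standard parabolic theory (approximation by intervals $[-R,R]$ with zero lateral data and passage to the limit $R\to\infty$ using the weighted estimate derived below). Exactly as in the proof of Lemma \ref{mainlem}, the choice of $c_i(t)$ in \eqref{ci(t)} is equivalent to $\frac{d}{dt}\int_{\mathbb{R}}T^s(t,x)w'(x-\xi_j(t))\,dx=0$, so the orthogonality conditions \eqref{orthcond1} propagate from the vanishing initial datum at $t=s$.

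Since by construction $g(t,x)=h(t,x)-\sum_{i=1}^k c_i(t)w'(x-\xi_i(t))$ satisfies condition \eqref{cond}, Lemma \ref{mainlem} applied to $T^s$ gives
\begin{equation*}
\|T^s\|_{\mathcal{C}_{\xF}((s,t)\times\mathbb{R})}\le C\,\|g\|_{\mathcal{C}_{\xF}((s,t)\times\mathbb{R})}\qquad\text{for every }t\in(s,-T_0].
\end{equation*}
Everything is reduced to controlling $\|g\|_{\mathcal{C}_{\xF}}$ in terms of $\|h\|_{\mathcal{C}_{\xF}}$ modulo a small multiple of $\|T^s\|_{\mathcal{C}_{\xF}}$. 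The key trick is to integrate by parts twice in $\int_{\mathbb{R}}T^s_{xx}(t,x)w'(x-\xi_j(t))\,dx$ and use the identity $w'''=-f'(w)w'$ (which follows from differentiating $w''+f(w)=0$). This collapses the first two integrals on the right-hand side of \eqref{ci(t)} into
\begin{equation*}
\int_{\mathbb{R}}\bigl(f'(z(t,x))-f'(w(x-\xi_j(t)))\bigr)\,T^s(t,x)\,w'(x-\xi_j(t))\,dx.
\end{equation*}
In the support of $w'(\cdot-\xi_j(t))$ one has $z(t,x)=(-1)^{j+1}w(x-\xi_j(t))+O(|t|^{-1})$ (neighbouring layers lie at distance $\sim\frac{1}{\sqrt 2}\log|t|$, so each $w_i\mp 1$ is $O(|t|^{-1})$ there), and since $f'(u)=1-3u^2$ is even, $|f'(z)-f'(w_j)|=O(|t|^{-1})$ in that set. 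Combined with $|\xi_j'(t)|\lesssim|t|^{-1}$ and the elementary bound $\int_{\mathbb{R}}\xF(t,\cdot)|w^{(\ell)}(\cdot-\xi_j(t))|\,dx\lesssim|t|^{-\xs/\sqrt 2}$ for $\ell=1,2$, inversion of the near-diagonal matrix $M(t)$ produces
\begin{equation*}
|c_i(t)|\le C\,|t|^{-\xs/\sqrt 2}\,\|h\|_{\mathcal{C}_{\xF}((s,t))}+C\,|t|^{-1-\xs/\sqrt 2}\,\|T^s\|_{\mathcal{C}_{\xF}((s,t))},\qquad t\in(s,-T_0].
\end{equation*}

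Since $\sup_{x\in\mathbb{R}}|w'(x-\xi_i(t))|/\xF(t,x)\lesssim|t|^{\xs/\sqrt 2}$ (the ratio is largest near $x=\xi_i$, where $\xF\sim|t|^{-\xs/\sqrt 2}$, and decays at infinity because $\sqrt 2>\xs$), the previous inequality translates into
\begin{equation*}
\sum_{i=1}^k\|c_i(\cdot)\,w'(\cdot-\xi_i(\cdot))\|_{\mathcal{C}_{\xF}((s,t))}\le C'\,\|h\|_{\mathcal{C}_{\xF}((s,t))}+C'\,T_0^{-1}\,\|T^s\|_{\mathcal{C}_{\xF}((s,t))}.
\end{equation*}
Inserting this into the bound coming from Lemma \ref{mainlem} and enlarging $T_0$ so that $C'T_0^{-1}$ is absorbed on the left produces \eqref{estfixmef}; uniqueness follows by applying the same estimate to the difference of two candidate solutions with $h\equiv 0$. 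The main obstacle is precisely this multiplier estimate: without the $w'''=-f'(w)w'$ cancellation one would only obtain a $T^s$-dependent contribution of size $|t|^{-\xs/\sqrt 2}\|T^s\|_{\mathcal{C}_{\xF}}$, which after multiplication by $|t|^{\xs/\sqrt 2}$ gives a full-strength $\|T^s\|_{\mathcal{C}_{\xF}}$ term that cannot be absorbed into \eqref{estfix}. It is the structural use of the ODE satisfied by $w$ that supplies the decisive extra $|t|^{-1}$ factor needed to close the loop.
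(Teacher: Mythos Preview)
Your argument is essentially the paper's, with the same two ingredients: the multiplier bound via the cancellation $w'''=-f'(w)w'$ (this is exactly the computation \eqref{estci} in Lemma~\ref{cilemma}, which turns $\int(T^s_{xx}+f'(z)T^s)w'(\cdot-\xi_j)$ into $\int(f'(z)-f'(w_j))T^s\,w'(\cdot-\xi_j)$ and gains the crucial extra $|t|^{-1}$), and the absorption of the resulting small $T^s$--term into the left side of \eqref{estfix} for $T_0$ large.

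The one place you diverge from the paper is existence. You substitute $c_i[\psi]$ back into the equation and invoke ``standard parabolic theory'' for the resulting nonlocal-in-$x$ linear equation, with a sketch of an $[-R,R]$ approximation. This is plausible but not quite standard, and the a~priori estimate you want to use in the limit (Lemma~\ref{mainlem}) is stated for the full-line problem and relies on the exact orthogonality \eqref{cond}, which an approximant on $[-R,R]$ need not satisfy. The paper sidesteps this by decoupling existence from the weighted estimate: it first runs a contraction for the map $\psi\mapsto T^s(h-C(\psi))$ on the short slab $(s,s+1)$, where crude parabolic bounds (constant of order~$1$ on a unit time interval) together with the smallness of $C(\psi)$ from Lemma~\ref{cilemma} give the contraction; only then does it invoke \eqref{estfix} to continue the solution to $(s,-T_0]$ and obtain \eqref{estfixmef}. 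Your route can be made rigorous with a little more care (e.g.\ by doing the same short-time fixed point you are implicitly relying on), but as written the existence step is the soft spot.
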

To prove the above Lemma we need the following result.

\begin{lemma}
Let $T>0$ big enough, $h\in\mathcal{C}_\xF((s,-T)$ and $\psi\in \mathcal{C}_\xF((s,-T)\times\mathbb{R}).$ Then there exist $c_i(t),\;i=1,...,k$ such that the nearly diagonal system \eqref{ci(t)} holds.

Furthermore the following estimates for $c_i$ are valid, for some constant $C>0$ that does not depends on $T,\;s,\;t,\;\psi,\;f$
$$|c_i(t)|\leq C\left(\left(\frac{1}{|t|}\right)^{1+\frac{\xs}{2\sqrt{2}}}\left|\left|\frac{\psi}{\xF}\right|\right|_{L^\infty((s,-T)\times\mathbb{R})}
+\left(\frac{1}{|t|}\right)^{\frac{\xs}{\sqrt{2}}}\left|\left|\frac{h}{\xF}\right|\right|_{L^\infty((s,-T)\times\mathbb{R})}\right),$$
\hfill$\forall\;t\in [s,-T],\;\;\;\forall\; i=1,...,k,$
\begin{align}\nonumber
\left|\frac{c_i(t)w'(x-\xi_i(t))}{\xF(t,x)}\right|\leq C\left(\left(\frac{1}{|t|}\right)^{1-\frac{\xs}{2\sqrt{2}}}\left|\left|\frac{\psi}{\xF}\right|\right|_{L^\infty((s,-T)\times\mathbb{R})}
+\left|\left|\frac{h}{\xF}\right|\right|_{L^\infty((s,-T)\times\mathbb{R})}\right),
\end{align}
\hfill$\forall\;t\in [s,-T],\;\;\;\forall\; i=1,...,k.$
\label{cilemma}
\end{lemma}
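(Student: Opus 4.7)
The proof amounts to linear algebra plus careful estimation of three integrals. Writing the system \eqref{ci(t)} as $M(t)c(t) = R(t)$, with $M_{ji}(t) := \int_{\mathbb{R}} w'(x-\xi_i)\,w'(x-\xi_j)\,dx$, one has $M_{jj}(t) = \int (w')^2 =: A > 0$, while for $i \neq j$ the exponential decay of $w'$ combined with $|\xi_i(t) - \xi_j(t)| \geq \frac{1}{\sqrt{2}}\log(-t) + O(1)$ gives $|M_{ji}(t)| = O(|t|^{-1}\log|t|)$. Hence $M(t) = A\cdot I + o(1)$ as $t \to -\infty$, so for $T$ large enough $M(t)$ is uniformly invertible on $(-\infty,-T]$ with $\|M(t)^{-1}\| \leq C$; this gives existence and uniqueness of $c_i(t)$, together with $|c_i(t)| \leq C\max_j|R_j(t)|$.

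To bound $R_j$ I would split it into three pieces. For the first, $\int(\psi_{xx}+f'(z)\psi)w'(x-\xi_j)\,dx$, I integrate by parts twice in the $\psi_{xx}$ term and use the identity $w''' = -f'(w)w'$ (which follows by differentiating $w''+f(w)=0$), thereby rewriting it as
\[
R_j^{(1)}(t) := \int_{\mathbb{R}}\bigl[f'(z(t,x))-f'(w(x-\xi_j(t)))\bigr]\,\psi(t,x)\,w'(x-\xi_j(t))\,dx.
\]
On the interval $I_j := [\tfrac{\xi_{j-1}+\xi_j}{2},\tfrac{\xi_j+\xi_{j+1}}{2}]$, the expansions used in the proof of Lemma \ref{remark} give $z = (-1)^{j+1}w(x-\xi_j) + O(e^{\sqrt{2}(\xi^0_{j-1}-x)}+e^{\sqrt{2}(x-\xi^0_{j+1})})$, and since $f'(u) = 1-3u^2$ is even a Taylor expansion yields $|f'(z)-f'(w(x-\xi_j))| \leq C(e^{\sqrt{2}(\xi^0_{j-1}-x)}+e^{\sqrt{2}(x-\xi^0_{j+1})})$ on $I_j$. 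Using $|\psi|\leq \|\psi/\xF\|_\infty \xF$ and the explicit form of $\xF$ on $I_j$, the change of variables $y = x-\xi_j$ reduces everything to one-dimensional exponential integrals; pairing the $\sqrt{2}$-decay of the $f'$-difference, the $\sigma$-behaviour of $\xF$, and the $\sqrt{2}$-decay of $w'$ over the half-length $\tfrac{1}{2\sqrt{2}}\log(-t)+O(1)$ of $I_j$ produces $|R_j^{(1)}(t)| \leq C|t|^{-1-\sigma/(2\sqrt{2})}\|\psi/\xF\|_\infty$.

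For the remaining two pieces I use the standard localization of $w'$ and $w''$ near $\xi_j$, which gives $\int \xF|w'(x-\xi_j)|\,dx,\ \int \xF|w''(x-\xi_j)|\,dx \leq C|t|^{-\sigma/\sqrt{2}}$. Combined with $|\xi_j'(t)| \leq C|t|^{-1}$ (from \eqref{xi}--\eqref{h} and the explicit form of $\xi^0_j$), the second piece $\xi_j'\int \psi w''(x-\xi_j)\,dx$ is bounded by $C|t|^{-1-\sigma/\sqrt{2}}\|\psi/\xF\|_\infty$, which is dominated by $|R_j^{(1)}|$; the third piece $\int h\,w'(x-\xi_j)\,dx$ is bounded by $C|t|^{-\sigma/\sqrt{2}}\|h/\xF\|_\infty$. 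Adding the three estimates and using $\|M^{-1}\|\leq C$ yields the first claim; the second then follows by multiplying by $w'(x-\xi_i)/\xF$ and invoking the pointwise bound $\sup_x\{w'(x-\xi_i(t))/\xF(t,x)\} \leq C|t|^{\sigma/\sqrt{2}}$ established at the start of the proof of Lemma \ref{remark}. The main delicate point is the "crossing exponentials" computation for $R_j^{(1)}$: one must track three different decay/growth rates ($\sqrt{2}$, $\sigma$, $\sqrt{2}$) across the long interval $I_j$ to obtain the sharp exponent $1+\sigma/(2\sqrt{2})$ rather than the naive $1+\sigma/\sqrt{2}$ one would get from a cruder bound $|f'(z)-f'(w_j)| \leq C$.
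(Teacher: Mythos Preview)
Your argument is essentially the paper's argument: the same near-diagonal structure for $M(t)$, the same integration-by-parts identity $\int(\psi_{xx}+f'(z)\psi)w'_j = \int(f'(z)-f'(w_j))\psi\,w'_j$, and the same pointwise bound $\sup_x w'(x-\xi_i)/\xF \leq C|t|^{\xs/\sqrt{2}}$ for the second estimate. Your bounds for the $\xi_j'\!\int\psi\,w''_j$ and $\int h\,w'_j$ pieces are correct (in fact your $|t|^{-1-\xs/\sqrt{2}}$ for the former is sharper than the paper's $|t|^{-1-\xs/(2\sqrt{2})}$, which is simply not recomputed there).

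There is one genuine gap in your treatment of $R_j^{(1)}$: you bound $|f'(z)-f'(w_j)|$ only on $I_j$ and then declare the full integral over $\R$ controlled. The contributions from $I_n$, $n\neq j$, are \emph{not} negligible; with only the crude bound $|f'(z)-f'(w_j)|\leq C$ there, one picks up $\int_{I_n}\xF\,w'(x-\xi_j)\,dx\sim |t|^{-1/2-\xs/(2\sqrt{2})}$, which is \emph{larger} than the claimed $|t|^{-1-\xs/(2\sqrt{2})}$. The paper sidesteps the interval-by-interval bookkeeping by observing the \emph{global} pointwise bound
\[
\bigl|\,z(t,x)-(-1)^{j+1}w(x-\xi_j)\,\bigr|\;w'(x-\xi_j)\;\leq\;\frac{C}{|t|}\qquad\text{for all }x\in\R,
\]
(each term $|w(x-\xi_i)\mp 1|\,w'(x-\xi_j)\leq Ce^{-\sqrt{2}|\xi_i-\xi_j|}$), and then simply uses $\int_\R\xF\,dx\leq C|t|^{-\xs/(2\sqrt{2})}$. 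This gives the exponent $1+\xs/(2\sqrt{2})$ in one line, without your ``crossing exponentials'' computation. Your route can be completed by repeating the exponential-integral analysis on each $I_n$, but the paper's global pointwise bound is both simpler and what actually makes the estimate work over all of $\R$.
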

\begin{proof}
For $i< j,$ we have
\begin{align}\nonumber
&\int_{\mathbb{R}}w'(x-\xi_i(t))w'(x-\xi_j(t))dx=\int_{\mathbb{R}}w'(x+(\xi_j(t)-\xi_i(t)))w'(x)dx\\ \nonumber
&=C\int_{{\mathbb{R}}}\left(\frac{1}{e^{\frac{\sqrt{2}}{2}\left(x-(\xi_j(t)-\xi_i(t))\right)}+e^{\frac{\sqrt{2}}{2}\left(-x+(\xi_j(t)-\xi_i(t))\right)}}\right)^2
\left(\frac{1}{e^{\frac{\sqrt{2}}{2}x}+e^{-\frac{\sqrt{2}}{2}x}}\right)^2dx\\ \nonumber
&=C\frac{1}{e^{\sqrt{2}(\xi_j(t)-\xi_i(t))}}
\int_{{\mathbb{R}}}\left(\frac{1}{e^{\frac{\sqrt{2}}{2}\left(x-2(\xi_j(t)-\xi_i(t))\right)}+e^{-\frac{\sqrt{2}}{2}x}}\right)^2
\left(\frac{1}{e^{\frac{\sqrt{2}}{2}x}+e^{-\frac{\sqrt{2}}{2}x}}\right)^2dx\\ \nonumber
&=C\frac{1}{e^{\sqrt{2}(\xi_j(t)-\xi_i(t))}}\int_{{\mathbb{R}}}F(t,x)dx,
\end{align}
where
$$F(t,x)=\left(\frac{1}{e^{\frac{\sqrt{2}}{2}\left(x-2(\xi_j(t)-\xi_i(t))\right)}+e^{-\frac{\sqrt{2}}{2}x}}\right)^2
\left(\frac{1}{e^{\frac{\sqrt{2}}{2}x}+e^{-\frac{\sqrt{2}}{2}x}}\right)^2.$$
Now
\begin{align}\nonumber
\int_{2(\xi_j(t)-\xi_i(t))}^\infty F(t,x)dx&<C,\\ \nonumber
\int_{-\infty}^0 F(t,x)dx&<C,\\ \nonumber
\int^{2(\xi_j(t)-\xi_i(t))}_0 F(t,x)dx&\leq C((\xi_j(t)-\xi_i(t))+1),
\end{align}
where the constant $C>0$ does not depend on $t.$

Thus we can easily obtain
$$
\int_{\mathbb{R}}w'(x-\xi_i(t))w'(x-\xi_j(t))dx\leq C\frac{|\log|t||}{t},\qquad\forall i\neq j,\;i,j=1,...k,
$$
where in the above inequality we have used the assumptions on $"\xi_j"$ see (Notation \ref{notation}). Thus the system is nearly diagonal and we can solve it for $T$ big enough.

Also we have
\begin{align}\nonumber
\int_{-\infty}^\infty\xF(t,x)dx=\sum_{j=1}^{k}\int_{\frac{\xi_j^0(t)+\xi_{j-1}^0(t)}{2}}^{\frac{\xi_j^0(t)
+\xi_{j+1}^0(t)}{2}}&e^{\xs(-x+\xi_{j-1}^0(t))}+e^{\xs(x-\xi_{j+1}^0(t))}dx\\
&\leq C\left(\frac{1}{|t|}\right)^\frac{\xs}{2\sqrt{2}},\label{****}
\end{align}
where $\xi_0^0=-\infty,\;\xi_{k+1}^0=\infty.$

\begin{align}
\nonumber
&\left|\int_{\mathbb{R}}\left(\psi_{xx}+f'(z(t,x))\psi\right)w'(x-\xi_j(t))dx\right|\\ \nonumber
&=\left|\int_{\mathbb{R}}\left(f'(w(x-\xi_j(t)))-f'(z(t,x))\right)\psi(t,x)w'(x-\xi_j(t))dx\right|\\ \nonumber
&=\left|\int_{\mathbb{R}}\left(f'(w(x))-f'(z(t,x+\xi_j(t)))\right)\psi(t,x+\xi_j(t))w'(x)dx\right|\\ \nonumber
&\leq C\left|\left|\frac{\psi}{\xF}\right|\right|_{L^\infty((s,-T)\times\mathbb{R})}\\ \nonumber
&\qquad\times\int_{\mathbb{R}}|(-1)^{j+1}w(x)-z(t,x+\xi_j(t))|\xF(t,x+\xi_j(t))w'(x)dx\\ \nonumber
&\leq C\left|\left|\frac{\psi}{\xF}\right|\right|_{L^\infty((s,-T)\times\mathbb{R})}\frac{1}{|t|}\int_{\mathbb{R}}\xF(t,x+\xi_j(t))dx\\
&\leq C\left|\left|\frac{\psi}{\xF}\right|\right|_{L^\infty((s,-T)\times\mathbb{R})}\left(\frac{1}{|t|}\right)^{1+\frac{\xs}{2\sqrt{2}}}.
\label{estci}
\end{align}
In the last inequality we have used the fact that, if $i>j,$ then
$$|w(x+\xi_j-\xi_i)+1|w'(x)\leq C \frac{1}{e^{\sqrt{2}(\xi_j(t)-\xi_i(t))}}.$$
Similarly we have that
\be
\left|\xi'_j(t)\int_{\mathbb{R}}\psi(t,x)w''(x-\xi_j(t))dx\right|\leq C\left|\left|\frac{\psi}{\xF}\right|\right|_{L^\infty((s,-T)\times\mathbb{R})} \left(\frac{1}{|t|}\right)^{1+\frac{\xs}{2\sqrt{2}}},\label{estci1}
\ee
$$
\left|\int_{\mathbb{R}}h(t,x)w'(x-\xi_j(t))dx\right|\leq C\left|\left|\frac{h}{\xF}\right|\right|_{L^\infty((s,-T)\times\mathbb{R})}\left(\frac{1}{|t|}\right)^{\frac{\xs}{\sqrt{2}}}.
$$
Thus, by the above inequalities we have
$$|c_i(t)|\leq C\left|\left|\frac{\psi}{\xF}\right|\right|_{L^\infty((s,-T)\times\mathbb{R})} \left(\frac{1}{|t|}\right)^{1+\frac{\xs}{2\sqrt{2}}},\;\;\;\forall\; i=1,...,k.$$
Now if $\frac{\xi_{i-1}^0(t)+\xi_i^0(t)}{2}\leq x\leq\frac{\xi_{i+1}^0(t)+\xi_i^0(t)}{2} ,$ we have
\begin{align}\nonumber
&\left|\frac{c_i(t)w'(x-\xi_i(t))}{\xF(t,x)}\right|\\ \nonumber&
\leq
 C|t|^{\frac{\xs}{\sqrt{2}}}\left(\left(\frac{1}{|t|}\right)^{1+\frac{\xs}{2\sqrt{2}}}\left|\left|\frac{\psi}{\xF}\right|\right|_{L^\infty((s,-T)\times\mathbb{R})}
+\left(\frac{1}{|t|}\right)^{\frac{\xs}{\sqrt{2}}}\left|\left|\frac{h}{\xF}\right|\right|_{L^\infty((s,-T)\times\mathbb{R})}\right)\\
&\leq  C\left(\left(\frac{1}{|t|}\right)^{1-\frac{\xs}{2\sqrt{2}}}\left|\left|\frac{\psi}{\xF}\right|\right|_{L^\infty((s,-T)\times\mathbb{R})}
+\left|\left|\frac{h}{\xF}\right|\right|_{L^\infty((s,-T)\times\mathbb{R})}\right).\label{12}
\end{align}
Combining all above the proof of Lemma is complete.

\end{proof}

\emph{Proof of Lemma \ref{cilemma*}.}
First we recall that
\begin{align}\nonumber
\sum_{i=1}^kc_i(t)&\int_{\mathbb{R}}w'(x-\xi_i(t))w'(x-\xi_j(t))dx\\ \nonumber
&=\int_{\mathbb{R}}\left(\psi_{xx}(t,x) w'(x-\xi_j(t))+f'(z(t,x)\psi(t,x)\right)\psi dx\\ \nonumber
&-\xi'_j(t)\int_{\mathbb{R}}\psi(t,x)w''(x-\xi_j(t))dx\\ \nonumber
&+\int_{\mathbb{R}}h(t,x)w'(x-\xi_j(t))dx,\qquad\forall i=1,...,k,\;t<-T.
\end{align}

We will prove that there exists a unique solution of the problem (\ref{proci}) by using a fix point argument.

Let $$X^s=\{\psi:\;||\psi||_{\mathcal{C}_\xF((s,s+1)\times\mathbb{R})}<\infty\}.$$
We consider the operator $A^s: X^s\rightarrow X^s$ given by
$$A^s(\psi)=T^s(h-C(\psi)),$$
where $T^s(g)$ denotes the solution to (\ref{mainpro*}) and $C(\psi)=\sum_{j=1}^k c_i(t)w'(x-\xi_j(t)).$
Also by standard parabolic estimates we have
\be
||A^s(\psi)||_{\mathcal{C}_\xF((s,s+1)\times\mathbb{R})}\leq C_0\left(
||h-C(\psi)||_{\mathcal{C}_\xF((s,s+1)\times\mathbb{R})}\right),\label{lem1}
\ee
for some uniform constant $C_0>0.$
We will show that the map $A^s$ defines a contraction mappping and we will apply the fixed point theorem to it.
To this end, set $c=C_0||h||_{\mathcal{C}_\xF((s,-T)\times\mathbb{R})}$ and
$$X^s_c=\{\psi:\;||\psi||_{C_\xF((s,s+1)\times\mathbb{R})}<2c\},$$
where constant $C_0$ taken from  \eqref{lem1}, for $C(T,s)=C(s+1,s).$ We note here that by standard parabolic theory, the constant $C(T,s)= C_0|(-T-s)|.$

We claim that $A^s(X^s_c)\subset X^s_c,$ indeed by inequality \eqref{lem1} we have
\bea
\nonumber
&&||A^s(\psi)||_{\mathcal{C}_\xF((s,s+1)\times\mathbb{R})}\leq C_0\left(
||h-C(\psi)||_{\mathcal{C}_\xF((s,s+1)\times\mathbb{R})}\right)\\ \nonumber
&\leq& C_0\left(||h||_{\mathcal{C}_\xF((s,-T)\times\mathbb{R})}+
||C(\psi)||_{\mathcal{C}_\xF((s,s+1)\times\mathbb{R})}\right)\\ \nonumber
&\leq&
\frac{C_0}{\sqrt{|s+1|}}\left(||\psi||_{\mathcal{C}_\xF((s,s+1)\times\mathbb{R})}\right)+c\\ \nonumber
&\leq& c+c,
\eea
where in the above inequalities we have used Lemma \ref{cilemma} and we have chosen $|s|$ big enough. Next we show that $A^s$ defines a contraction map. Indeed, since $C(\psi)$ is linear in $\psi$ we have
\bea\nonumber
&&||A^s(\psi_1)-A^s(\psi_2)||_{\mathcal{C}_\xF((s,s+1)\times\mathbb{R})}\\ \nonumber
&\leq&||C(\psi_1)-C(\psi_2)||_{\mathcal{C}_\xF((s,s+1)\times\mathbb{R})}=
||C(\psi_1-\psi_2)||_{\mathcal{C}_\xF((s,s+1)\times\mathbb{R})}\\ \nonumber
&\leq&\frac{C}{\sqrt{|s+1|}}||(\psi_1-\psi_2)||_{\mathcal{C}_\xF((s,s+1)\times\mathbb{R})}\\ \nonumber
&\leq&\frac{1}{2}||(\psi_1-\psi_2)||_{\mathcal{C}_\xF((s,s+1)\times\mathbb{R})}.
\eea
Combining all above, we have by fixed point theorem that there exists a $\psi^s\in X^s$ so that $A^s(\psi^s)=\psi^s,$ meaning that the equation (\ref{proci}) has a solution $\psi^s,$ for $-T=s+1.$

We claim that $\psi^s(t,x)$ can be extended to a solution on $(s,-T_0]\times\mathbb{R},$ still satisfies the orthogonality condition (\ref{orthcond}) and the a priori estimate. To this end, assume that our solution $\psi^s(t,\cdot)$ exists for $s\leq t\leq -T,$ where $T>T_0$ is the maximal time of the existence. Since $\psi^s$ satisfies the orthogonality condition (\ref{orthcond}), we have by \eqref{estfix}
$$||\psi^s||_{\mathcal{C}_\xF((s,-T)\times\mathbb{R})}\leq C||h-C(\psi)||_{\mathcal{C}_\xF((s,-T)\times\mathbb{R})}.$$
Thus if we choose $T_0$ big enough, we have by Lemma \ref{cilemma} that
$$||\psi^s||_{\mathcal{C}_\xF((s,-T)\times\mathbb{R})}\leq C||h||_{\mathcal{C}_\xF((s,-T)\times\mathbb{R})}\leq C||h||_{\mathcal{C}_\xF((s,-T_0)\times\mathbb{R})}.$$
It follows that $\psi^s$ can be extended past time $-T,$ unless $T=T_0.$ Moreover, (\ref{estfixmef}) is satisfied as well and $\psi^s$ also satisfies the orthogonality condition.\hfill$\Box$

\bigskip
\noindent{ \bf Proof of Proposition \ref{prop1} }
 Take a sequence $s_j\rightarrow-\infty$ and  $\psi_j=\psi^{s_j}$ where $\psi^{s_j}$ is the function   (\ref{proci}) with $s=s_j.$ Then by (\ref{estfix}), we can find a subsequence $\{\psi_j\}$ and $\psi$ such that $\psi_j\rightarrow\psi$ locally uniformly in $(-\infty,-T_0)\times\mathbb{R}.$

Using (\ref{estfix}) and standard parabolic theory we have that $\psi$ is a solution of (\ref{proci}) and satisfies (\ref{estfix**}).
The proof is concluded.
\section{The nonlinear problem}
Going back to the nonlinear problem, function $\psi$  is a solution of (\ref{mainpro}) if and only if  $\psi\in C_\xF((-\infty,-T_0)\times\mathbb{R})$ solves the fixed point
problem
\be
  \psi= B(\psi ), \label{2.14}
\ee
where
$$
B(\psi ) := A(E(\psi))
$$
and $A$ is the operator in Proposition \ref{fixth}.

Let $T_0>1,$ we define
$$\xL=\{h\in C^1(-\infty,-T_0]:\;\sup_{t\leq -T_0}|h(t)|+\sup_{t\leq -T_0}|t||h'(t)|<1\}$$
and
$$||h||_\xL=\sup_{t\leq -T_0}(|h(t)|)+\sup_{t\leq -T_0}(|t||h'(t)|).$$

The main goal in this section is to prove the following Proposition.
\begin{proposition}
Let $\xs<\sqrt{2}$ and $\xn=\frac{\sqrt{2}-\xs}{2\sqrt{2}}$. There exists number $T_0> 0,$ depending only on $\xs$ such that for any given functions $h$ in $\xL,$ there is a solution  $ \psi= 	 \Psi(h)$ of (\ref{2.14}), with respect $\xi=\xi^0+h.$ The solution $\psi$
satisfies the orthogonality conditions (2.9)-(2.10).
Moreover, the following estimate holds
\be
||\Psi(h_1)-\Psi(h_2)||_{\mathcal{C}_\xF((-\infty,-T_0)\times\mathbb{R})}\leq \frac{C}{T_0^\xn}||h_1-h_2||_\xL,\label{diafora}
\ee
where $C$ is a universal constant.\label{mainproposition}
\end{proposition}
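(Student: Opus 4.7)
The plan is a Banach contraction argument for $B(\psi) = A(E + N(\psi))$ inside a closed ball of $\mathcal{C}_\xF((-\infty,-T_0)\times\R)$, using Proposition \ref{fixth} as the linear inverse and exploiting the fact that the weight $\xF$ itself is small when $T_0$ is large. A direct inspection of \eqref{bound} together with the explicit formula \eqref{pp} for $\xi^0$ gives
$$
\sup_{t \le -T_0} \|\xF(t,\cdot)\|_{L^\infty(\R)} \le C\, T_0^{-\xs/(2\sqrt{2})},
$$
since in each strip the maximum of $\xF(t,\cdot)$ is attained at the boundary between consecutive strips, where $\xF \sim e^{-\xs(\xi_{j+1}^0-\xi_j^0)/2}\sim |t|^{-\xs/(2\sqrt{2})}$. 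Consequently every $\psi\in \mathcal{C}_\xF$ satisfies $\|\psi\|_{L^\infty}\le C\,T_0^{-\xs/(2\sqrt{2})}\|\psi\|_{\mathcal{C}_\xF}$, which is the sole source of smallness in the whole argument.

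First I would check that $B$ maps a fixed closed ball $\overline{\mathcal{B}}_\rho \subset \mathcal{C}_\xF$ into itself. By Proposition \ref{fixth} and Lemma \ref{remark},
$$
\|B(\psi)\|_{\mathcal{C}_\xF} \le C\|E\|_{\mathcal{C}_\xF}+C\|N(\psi)\|_{\mathcal{C}_\xF}\le C_0+C\|\psi\|_{L^\infty}\|\psi\|_{\mathcal{C}_\xF},
$$
using $|N(\psi)|\le C(|\psi|^2+|\psi|^3)$ since $f$ is a cubic polynomial. Choosing $\rho:=2C_0$ and then $T_0$ so large that $C\,T_0^{-\xs/(2\sqrt{2})}\rho\le 1/2$ closes the self-mapping estimate. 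The contraction property is then immediate,
$$
\|B(\psi_1)-B(\psi_2)\|_{\mathcal{C}_\xF}\le C\|N(\psi_1)-N(\psi_2)\|_{\mathcal{C}_\xF}\le C\,T_0^{-\xs/(2\sqrt{2})}\rho\,\|\psi_1-\psi_2\|_{\mathcal{C}_\xF},
$$
because $N'(\psi)=O(|\psi|)$. Banach's theorem produces the fixed point $\psi=\Psi(h)$, and the orthogonality conditions \eqref{orthcond} are automatically inherited from the construction of $A$.

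For the Lipschitz estimate \eqref{diafora}, set $\psi_i=\Psi(h_i)$; subtracting the two fixed-point equations gives
$$
\psi_1-\psi_2 = A\big(E(h_1)-E(h_2) + N(\psi_1)-N(\psi_2)\big),
$$
where the nonlinear part is absorbed on the left-hand side exactly as in the contraction step. The heart of the matter is therefore the estimate
$$
\|E(h_1)-E(h_2)\|_{\mathcal{C}_\xF}\le \frac{C}{T_0^{\xn}}\|h_1-h_2\|_{\xL},
$$
which I would obtain by differentiating $E$ with respect to $\xi$ and $\xi'$. The ``transport'' contribution $\sum_j(-1)^{j+1}w'_j\,\xi_j'$ is Lipschitz in $h$ with an extra factor $|t|^{-1}$ coming from the control $|t||h_j'(t)|\le 1$ built into $\xL$, while the ``interaction'' contribution $f(z)-\sum_j(-1)^{j+1}f(w_j)$ depends on products of the form $(w_i\mp 1)\,w_j'$ whose derivative with respect to $\xi$ produces an additional exponential factor of size $e^{-\xs(\xi_{j+1}^0-\xi_j^0)/2}\sim |t|^{-\xs/(2\sqrt{2})}$. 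Combining these two sources of decay and weighing against the pointwise size $|E|\lesssim \xF$ yields the stated exponent $\xn=(\sqrt{2}-\xs)/(2\sqrt{2})$.

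The main obstacle is precisely this last inequality: Lemma \ref{remark} already supplies the pointwise bound $|E|\lesssim \xF$ in the $\mathcal{C}_\xF$-norm for any $h\in\xL$, but upgrading this to a Lipschitz estimate in $h$ with the sharp gain $T_0^{-\xn}$ demands a careful strip-by-strip bookkeeping of both the transport and interaction contributions, and of which factor in each summand actually depends on $h$ versus $h'$. Everything else reduces to the standard contraction-mapping machinery combined with the a priori estimates provided by Propositions \ref{otinanai} and \ref{fixth}.
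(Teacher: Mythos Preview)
Your contraction argument for part (a)---existence of $\Psi(h)$---is essentially the paper's own, and is fine.

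The genuine gap is in part (b). When you write
\[
\psi_1-\psi_2 = A\big(E(h_1)-E(h_2) + N(\psi_1)-N(\psi_2)\big),
\]
you are treating $A$ as a single linear operator. But $A$ depends on $h$: the potential in \eqref{fixprop} is $f'(z(t,x))$ with $z$ built from $\xi=\xi^0+h$, and the orthogonality constraints \eqref{orthcond1} are taken against $w'(x-\xi_i(t))$. So in fact $\psi_1=A_{h_1}(\cdots)$ and $\psi_2=A_{h_2}(\cdots)$, and you cannot simply subtract. More concretely, the a~priori estimate of Proposition~\ref{fixth} (or Lemma~\ref{mainlem}) applies only to functions satisfying the orthogonality conditions for a \emph{fixed} family $\xi$; the difference $\psi_1-\psi_2$ satisfies neither set of conditions, so there is no linear estimate available for it as written. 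The same oversight appears in the nonlinearity: $N(\psi)=N(\psi,h)$ depends on $h$ through $z$, and your subtraction ignores this.

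The paper handles exactly this point by replacing $\psi_2$ with a corrected function
\[
\overline{\psi}^2=\psi_2-\sum_{i=1}^k\lambda_i(t)\,w'(x-\xi_i^1(t)),
\]
where the coefficients $\lambda_i(t)$ are determined so that $Y:=\psi_1-\overline{\psi}^2$ is orthogonal to all $w'(x-\xi_j^1(t))$. One then applies the linear estimate (with $\xi=\xi^1$) to $Y$, and closes the argument by showing
\[
|\lambda_i(t)|+|\lambda_i'(t)|\le C\,T_0^{-\nu}\,|t|^{-\xs/\sqrt{2}}\,\|h_1-h_2\|_{\xL},
\]
which requires estimating $\int_{\R}\psi_2(t,x)\big(w'(x-\xi_j^1)-w'(x-\xi_j^2)\big)\,dx$ and its time derivative. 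Lemmas~\ref{dia1}--\ref{dia3} then supply the Lipschitz bounds on $N(\psi,h)$, $E(h)$, and $C(\psi,h,t)$ in both arguments. Without this correction step your proof of \eqref{diafora} does not go through.
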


To prove Proposition \ref{mainproposition} we need to prove some lemmas first.

Set
$$X_{T_0}=\{\psi:\;||\psi||_{\mathcal{C}_\xF((-\infty,-T_0)\times\mathbb{R})}<2\frac{C_0}{T_0^\xn}\},$$
for some fixed constant $C_0.$

We denote by $N(\psi,h)$ the function $N(\psi)$ in \eqref{mainlem} with respect $\psi$ and $\xi=\xi^0+h.$
 Also we denote by $z_i$ the respective function in \eqref{z} with respect $\xi=\xi_i=\xi^0+h_i,$ $i=1,2.$
\begin{lemma}
Let $h_1,\;h_2\in \xL$ and $\psi_1,\;\psi_2\in X_{T_0}.$
Then there exists a constant $C=C(C_0)$ such that
\begin{align*}
||N(\psi_1,h_1)&-N(\psi_2,h_2)||_{\mathcal{C}_\xF((-\infty,-T_0)\times\mathbb{R})}\\ &
\leq \frac{C}{T_0^\xn}\left(||\psi_1-\psi_2||_{\mathcal{C}_\xF((-\infty,-T_0)\times\mathbb{R})}+||h_1-h_2||_{\xL}\right).
\end{align*}\label{dia1}
\end{lemma}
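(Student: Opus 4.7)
The plan is to reduce the estimate to an explicit algebraic identity. Since $f(u) = u - u^3$, direct expansion gives
\[
N(\psi,h) = f(\psi + z) - f(z) - f'(z)\psi = -3z\psi^2 - \psi^3,
\]
where $z$ is built from $\xi = \xi^0 + h$. Writing $z_1, z_2$ for the versions of $z$ corresponding to $h_1, h_2$, I would then split the difference algebraically as
\begin{align*}
N(\psi_1, h_1) - N(\psi_2, h_2) &= -3z_1(\psi_1 - \psi_2)(\psi_1 + \psi_2) - 3(z_1 - z_2)\psi_2^2\\
&\quad - (\psi_1 - \psi_2)(\psi_1^2 + \psi_1\psi_2 + \psi_2^2),
\end{align*}
so each term is linear either in $\psi_1 - \psi_2$ with a quadratic-in-$\psi$ multiplier, or in $z_1 - z_2$ with a $\psi_2^2$ multiplier.

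Next, I would gather three pointwise ingredients. \textbf{(a)} The weight $\xF$ is bounded from above by a universal constant: in each strip defining $\xF$, both exponents $\xs(\xi_{j-1}^0 - x)$ and $\xs(x - \xi_{j+1}^0)$ are non-positive, giving $\xF \leq 2$. Consequently, $\psi_i \in X_{T_0}$ yields $|\psi_i(t,x)| \leq 2C_0\xF(t,x)/T_0^\xn \leq C/T_0^\xn$ pointwise. \textbf{(b)} The functions $z_1, z_2$ are bounded by a constant depending only on $k$. \textbf{(c)} The mean value theorem applied termwise to $z_i$ together with boundedness of $w'$ gives
\[
|z_1(t, x) - z_2(t, x)| \leq \sum_{j=1}^{k} |w'(\eta_j)| \, |h_1^j(t) - h_2^j(t)| \leq C\, ||h_1 - h_2||_\xL.
\]

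With these in hand I would divide each piece by $\xF$ and take the $L^\infty$-norm. The first term gives
\[
\frac{|z_1(\psi_1 - \psi_2)(\psi_1 + \psi_2)|}{\xF} \leq C(|\psi_1| + |\psi_2|)\frac{|\psi_1 - \psi_2|}{\xF} \leq \frac{C}{T_0^\xn} ||\psi_1 - \psi_2||_{\mathcal{C}_\xF((-\infty,-T_0)\times\mathbb{R})}.
\]
The second gives
\[
\frac{|(z_1 - z_2)\psi_2^2|}{\xF} \leq C ||h_1 - h_2||_\xL \cdot |\psi_2| \cdot \frac{|\psi_2|}{\xF} \leq \frac{C}{T_0^{2\xn}} ||h_1 - h_2||_\xL.
\]
The cubic difference yields, analogously, a bound of $C T_0^{-2\xn} ||\psi_1 - \psi_2||_{\mathcal{C}_\xF((-\infty,-T_0)\times\mathbb{R})}$. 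Summing the three contributions and using $T_0 \geq 1$ to absorb the extra factors of $T_0^{-\xn}$ produces the stated Lipschitz estimate.

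I do not anticipate a genuine obstacle: the computation is essentially pointwise Lipschitz continuity of a polynomial nonlinearity in two parameters. The only subtle point is step \textbf{(a)}, namely converting $\mathcal{C}_\xF$-smallness into $L^\infty$-smallness, which succeeds exactly because $\xF$ is uniformly bounded above. The structural reason the prefactor $1/T_0^\xn$ appears is that $N$ is quadratic to leading order in $\psi$, so one power of $\psi_i$ — worth $1/T_0^\xn$ in $L^\infty$ — can always be absorbed as a coefficient while the other factor delivers the Lipschitz increment divided by $\xF$.
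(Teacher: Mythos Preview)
Your proposal is correct and follows essentially the same approach as the paper: both exploit the explicit polynomial identity $N(\psi,h)=-3z\psi^2-\psi^3$, bound $|z_1-z_2|$ by the mean value theorem, and use the $L^\infty$-smallness of $\psi_i$ (via $\xF\leq 2$) to extract the prefactor $T_0^{-\xn}$. The only cosmetic difference is that the paper splits via the triangle inequality---first $N(\psi_1,h_1)-N(\psi_2,h_1)$, then $N(\psi_2,h_1)-N(\psi_2,h_2)$---whereas you write a single three-term algebraic decomposition; the pointwise inequalities obtained are the same.
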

\begin{proof}
First we will prove that there exists constant $C>0$ which depends only on $C_0$ such that
\be
||N(\psi_1,h_1)-N(\psi_2,h_1)||_{\mathcal{C}_\xF((-\infty,-T_0)\times\mathbb{R})}\leq \frac{C}{T_0^\xn}||\psi_1-\psi_2||_{\mathcal{C}_\xF((-\infty,-T_0)\times\mathbb{R})}.\label{n1}
\ee
By straightforward calculation we can easily show that
$$|N(\psi_1,h_1)-N(\psi_2,h_1)|\leq \frac{C}{T_0^\xn}|\psi_1-\psi_2|(\xF+\xF^2),$$
where the constant $C$ depend on $C_0$ and the proof of \eqref{n1} follows.

Now we will prove that
\be
||N(\psi_2,h_1)-N(\psi_2,h_2)||_{\mathcal{C}_\xF((-\infty,-T_0)\times\mathbb{R})}\leq C||h_1-h_2||_{\xL}.\label{n2}
\ee
where the constant $C$ depends on $C_0.$

By straightforward calculations we have
\begin{align}\nonumber
|N(\psi_2,h_1)-N(\psi_2,h_2)|&=|-(z_1+\psi_2)^3+z_1^3+3z_1^2\psi_2+(z_2+\psi_2)^3-z_2^3|-3z_2^2\psi_2\\
&\leq \frac{C}{T_0^\xn}|h_1-h_2|\xF^2,\label{n2*}
\end{align}
which implies \eqref{n2}.
By \eqref{n1} and \eqref{n2} the result follows.
\end{proof}
We denote by $E(\psi,h)$ the function $N(\psi)$ in \eqref{mainlem} with respect $\psi$ and $\xi=\xi^0+h.$
\begin{lemma}
Let $h_1,\;h_2\in \xL.$
Then there exists constant $C=C(C_0)$ such that
\be
||E(h_1)-E(h_2)||_{\mathcal{C}_\xF((-\infty,-T_0)\times\mathbb{R})}\leq \frac{C}{T_0^\xn}||h_1-h_2||_{\xL}.
\ee\label{dia2}
\end{lemma}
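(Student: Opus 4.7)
The proof should mirror the structure of Lemma~\ref{dia1}. I would decompose
\[
E(h)=E_t(h)+E_n(h),\qquad E_t(h):=\sum_{j=1}^{k}(-1)^{j+1}w'(x-\xi_j)\xi_j'(t),\qquad E_n(h):=f(z)-\sum_{j=1}^{k}(-1)^{j+1}f(w_j),
\]
with $\xi=\xi^0+h$, and estimate the differences $E_t(h_1)-E_t(h_2)$ and $E_n(h_1)-E_n(h_2)$ separately, using the Mean Value Theorem together with the pointwise bounds already established in the proof of Lemma~\ref{remark}.

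For the transport piece $E_t$, each summand splits as
\[
w'(x-\xi_{1,j})\xi'_{1,j}-w'(x-\xi_{2,j})\xi'_{2,j}=w'(x-\xi_{1,j})(h'_{1,j}-h'_{2,j})+\bigl(w'(x-\xi_{1,j})-w'(x-\xi_{2,j})\bigr)\xi'_{2,j},
\]
and the MVT in the second bracket produces a $w''$. Using the pointwise inequalities $w'(x-\xi_j),\,w''(x-\xi_j)\leq C|t|^{\xs/\sqrt{2}}\xF$ shown in the proof of Lemma~\ref{remark}, together with $|\xi'_{2,j}|\leq C/|t|$, $|h'_{1,j}-h'_{2,j}|\leq ||h_1-h_2||_\xL/|t|$, and $|h_{1,j}-h_{2,j}|\leq ||h_1-h_2||_\xL$, one obtains
\[
|E_t(h_1)-E_t(h_2)|\leq C|t|^{\xs/\sqrt{2}-1}\xF\,||h_1-h_2||_\xL\leq CT_0^{-\xn}\xF\,||h_1-h_2||_\xL,
\]
since $1-\xs/\sqrt{2}=2\xn\geq \xn$ and $|t|\geq T_0$.

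For the interaction piece $E_n$, I would interpolate via $h^\tau:=(1-\tau)h_2+\tau h_1$ and write $E_n(h_1)-E_n(h_2)=\int_0^1\partial_\tau E_n(h^\tau)\,d\tau$. A direct computation gives
\[
\partial_\tau E_n(h^\tau)=-\sum_{j=1}^{k}(-1)^{j+1}(h_{1,j}-h_{2,j})\,w'(x-\xi_j^\tau)\bigl[f'(z^\tau)-f'(w_j^\tau)\bigr],
\]
and the identity $f'(u)-f'(v)=-3(u-v)(u+v)$, applied exactly as in the proof of Lemma~\ref{remark}, bounds $|f'(z^\tau)-f'(w_j^\tau)|$ on the region $A_j$ by a constant times $e^{\sqrt{2}(-x+\xi_{j-1}^0)}+e^{\sqrt{2}(x-\xi_{j+1}^0)}$. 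Because the decay exponent $\sqrt{2}$ strictly exceeds the exponent $\xs$ used in the definition of $\xF$, this last quantity is in fact $\leq C|t|^{-\xn}\xF$; since $|w'|\leq C$, this yields $|\partial_\tau E_n(h^\tau)|\leq C|t|^{-\xn}\xF\,||h_1-h_2||_\xL$ uniformly in $\tau\in[0,1]$, and integrating gives the claimed estimate.

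The main bookkeeping point, and the one I expect to be most delicate, is extracting the extra $|t|^{-\xn}$ factor beyond the naive bound $|E|\leq C\xF$ of Lemma~\ref{remark}: this improvement is implicit in the proof and comes entirely from the strict inequality $\xs<\sqrt{2}$ between the exponent used to define $\xF$ and the intrinsic decay rate of $w\mp 1$. Everything else is routine, with the constants remaining uniform in $\tau$ because $||h_i||_\xL<1$ makes $\xi^\tau=\xi^0+h^\tau$ a bounded perturbation of the unperturbed profile $\xi^0$.
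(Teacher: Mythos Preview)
Your proposal is correct and follows essentially the same approach as the paper: both split $E$ into the transport piece and the nonlinear interaction piece, apply the Mean Value Theorem (you via the interpolation path $h^\tau$, the paper directly on each $w(x-\xi_{j-1})-w(x-\zeta_{j-1})$), and then invoke the estimates from the proof of Lemma~\ref{remark}. Your identification of the key mechanism---that the extra factor $|t|^{-\xn}$ comes from the strict inequality $\xs<\sqrt{2}$ via the bound $\xF^{-1}|w'(x-\xi_{j-1}^0)|\leq C|t|^{-\xn}$---is exactly the inequality the paper records at the end of its proof.
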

\begin{proof}
Set $\xi=\xi^0+h_1,$ $\xz=\xi^0+h_2.$
Let $$\frac{\xi_j^0(t)+\xi_{j-1}^0(t)}{2}\leq x\leq \frac{\xi_j^0(t)+\xi_{j+1}^0(t)}{2},\;j=1,...,k,$$ with $\xi_0^0=-\infty$ and $\xi_{k+1}^0=\infty.$
Note here that, there exists $\xm\in[-1,1]$ such that
\begin{align*}
|w(x-\xi_{j-1}(t))-w(x-\xz_{j-1}(t))|&\leq C|h_1-h_2||w'(x-\xi_{j-1}^0(t)+\xm)|\\ \nonumber
&\leq C|h_1-h_2||w'(x-\xi_{j-1}^0(t))|.
\end{align*}
Thus in view of the proof of Lemma \ref{remark} and the above inequality we have
\begin{align*}
|f(z_1(t,x))&-\sum_{j=1}^{k}(-1)^{j+1}f(w(x-\xi_j))-f(z_2(t,x))+\sum_{j=1}^{k}(-1)^{j+1}f(w(x-\xz_j))\\
&\leq C|h_1-h_2||w'(x-\xi_{j-1}^0(t))|.
\end{align*}

Also, we can easily show that
$$
|\sum_{j=1}^{k}(-1)^{j+1}w'(x-\xi_j(t))\xi_j'(t)-\sum_{j=1}^{k}(-1)^{j+1}w'(x-\xz_j(t))\xz'(t)|\leq \frac{C}{t}||h_1-h_2||_{\xL}.
$$

But for any $$\frac{\xi_j^0(t)+\xi_{j-1}^0(t)}{2}\leq x\leq \frac{\xi_j^0(t)+\xi_{j+1}^0(t)}{2},\;j=1,...,k,$$
we have
$$\frac{1}{\xF}\leq  C|t|^{\frac{\xs}{\sqrt{2}}}$$
and $$\frac{1}{\xF}|w'(x-\xi_{j-1}^0(t))|\leq C|t|^{-\xn}.$$
Combining all above we have the desired result.
\end{proof}
\begin{lemma}
Let $h_1,\;h_2\in \xL,$ $\psi_1,\;\psi_2,\;\psi\in X.$  Also let $C(\psi,h,t)=(c_1(t),...,c_k(t))$ satisfy

 \begin{align}\nonumber
\sum_{i=1}^kc_i(t)&\int_{\mathbb{R}}w'(x-\xi_i(t))w'(x-\xi_j(t))dx\\ \nonumber
&=\int_{\mathbb{R}}\left(-f'(w(x-\xi_j))+f'(z(t,x))\right)\psi(t,x)w'(x-\xi_j(t)) dx\\ \nonumber
&-\xi'_j(t)\int_{\mathbb{R}}\psi(t,x)w''(x-\xi_j(t))dx\\ \nonumber
&+\int_{\mathbb{R}}(E(h)+N(\psi,h))w'(x-\xi_j(t))dx,\qquad\forall j=1,...,k,\;t<-T.
\end{align}
 with respect $\psi$ and $\xi=\xi^0+h.$
Then
\be
|C(\psi_1,h_1,t)-C(\psi_2,h_2,t)|\leq \frac{C}{|t|^{1+\frac{\xs}{2\sqrt{2}}}}||\psi_1-\psi_2||_{\mathcal{C}_\xF((-\infty,-T_0)\times\mathbb{R})}+\frac{C}{|t|^{\xn+\frac{\xs}{\sqrt{2}}}}||h_1-h_2||_{\xL},
\ee
for some positive constant $C_0$ which depend only on $C_0.$
\label{dia3}
\end{lemma}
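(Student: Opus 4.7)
The plan is to mimic the proof of Lemma \ref{cilemma} applied to differences. The linear system determining $C(\psi,h,t)=(c_1(t),\dots,c_k(t))$ has the form $G(\xi(t))\,C(\psi,h,t) = R(\psi,h,t)$, where $G_{ij}(\xi)=\int_{\mathbb R} w'(x-\xi_i)w'(x-\xi_j)\,dx$ and the vector $R_j$ is built from the four integral terms in the definition of $c_j$. As shown in the proof of Lemma \ref{cilemma}, $G$ has a diagonal of order $\|w'\|_{L^2}^2$ and off-diagonal entries of order $|t|^{-1}\log|t|$, so $G^{-1}$ exists and is uniformly bounded for $T_0$ large. I therefore write
\begin{align*}
C(\psi_1,h_1,t) - C(\psi_2,h_2,t) &= G^{-1}(\xi_1(t))\bigl[R(\psi_1,h_1,t) - R(\psi_2,h_2,t)\bigr]\\
&\quad + \bigl[G^{-1}(\xi_1(t)) - G^{-1}(\xi_2(t))\bigr]\,R(\psi_2,h_2,t).
\end{align*}
The second term is handled by smoothness of $G$ in $\xi$: since $|\xi_1(t)-\xi_2(t)|\leq \|h_1-h_2\|_{\xL}$ we obtain $|G^{-1}(\xi_1(t))-G^{-1}(\xi_2(t))|\leq C|t|^{-1}\log|t|\,\|h_1-h_2\|_{\xL}$, which combined with the a priori bounds of Lemma \ref{cilemma} on $R(\psi_2,h_2,t)$ yields a contribution well within the claimed estimate.

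The heart of the proof is the estimate of $R(\psi_1,h_1,t) - R(\psi_2,h_2,t)$, which I decompose into its four natural pieces. For the contributions of $E(h)$ and $N(\psi,h)$, Lemmas \ref{dia2} and \ref{dia1} control the $\mathcal{C}_\xF$-norm of the corresponding integrand differences by $(C/T_0^\xn)\bigl(\|\psi_1-\psi_2\|_{\mathcal{C}_\xF}+\|h_1-h_2\|_{\xL}\bigr)$; multiplying by $\int_{\mathbb R} \xF(t,x)w'(x-\xi_j(t))\,dx\leq C|t|^{-\xs/\sqrt{2}}$, as computed in Lemma \ref{cilemma}, and absorbing $T_0^{-\xn}\leq |t|^{-\xn}$, yields a bound of the desired form $C|t|^{-\xn-\xs/\sqrt{2}}\bigl(\|\psi_1-\psi_2\|_{\mathcal{C}_\xF}+\|h_1-h_2\|_{\xL}\bigr)$. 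For the two $\psi$-linear pieces $\int(f'(z)-f'(w_j))\psi\,w'(x-\xi_j)\,dx$ and $\xi_j'(t)\int\psi\,w''(x-\xi_j)\,dx$, I further split the difference as a $\psi$-difference at fixed $h=h_1$ plus an $h$-difference at fixed $\psi=\psi_2$. The first piece is linear in $\psi_1-\psi_2$ and yields exactly $C|t|^{-1-\xs/(2\sqrt{2})}\|\psi_1-\psi_2\|_{\mathcal{C}_\xF}$ by repeating \eqref{estci}--\eqref{estci1}. For the second piece, $f'(z(\xi))$, $w'(\cdot-\xi_j)$, $w''(\cdot-\xi_j)$ and $\xi_j'=(\xi_j^0)'+h_j'$ are all Lipschitz in $h$, the latter with modulus $O(|t|^{-1})$, so running the same computation with these Lipschitz increments in place of $\psi_1-\psi_2$ produces $C|t|^{-1-\xs/(2\sqrt{2})}\|h_1-h_2\|_{\xL}$, which is absorbed into the claimed $|t|^{-\xn-\xs/\sqrt{2}}$ weight.

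The main obstacle is purely bookkeeping: keeping track of the exponents $\xs/\sqrt{2}$, $\xs/(2\sqrt{2})$, $\xn$ and $1$ appearing in the four contributions and verifying that, once assembled, they produce the two stated weights $|t|^{-1-\xs/(2\sqrt{2})}$ and $|t|^{-\xn-\xs/\sqrt{2}}$. No new estimate is required beyond careful application of Lemmas \ref{dia1} and \ref{dia2} together with a first-order perturbation of the Gram matrix; the argument is essentially a linearization of Lemma \ref{cilemma} with respect to both $\psi$ and $h$.
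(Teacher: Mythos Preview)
Your proposal is correct and follows precisely the route the paper indicates: the paper's own proof of this lemma is omitted, stating only that it consists of ``very similar calculations like in Lemmas \ref{cilemma}, \ref{dia1}, \ref{dia2}'', and your decomposition into the Gram-matrix perturbation plus the four pieces of $R(\psi,h,t)$, controlled respectively via \eqref{estci}--\eqref{estci1} and Lemmas \ref{dia1}--\ref{dia2}, is exactly that. One small remark: the inequality you write as ``$T_0^{-\xn}\le |t|^{-\xn}$'' goes the wrong way; what you actually need (and what the proofs of Lemmas \ref{dia1} and \ref{dia2} supply) is that the pointwise estimates there hold with the sharper weight $|t|^{-\xn}$ in place of the stated $T_0^{-\xn}$, since the key bound $\xF^{-1}|w'(x-\xi^0_{j\pm1})|\le C|t|^{-\xn}$ is $t$-dependent.
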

\begin{proof}
For the proof of Lemma, we do very similar calculations like in Lemmas \ref{cilemma}, \ref{dia1}, \ref{dia2} and we omit it.
\end{proof}

\medskip
\emph{Proof of Proposition \ref{mainproposition} }
a)
We consider the operator $B: \mathcal{C}_\xF((-\infty,-T_0)\times\mathbb{R})\rightarrow \mathcal{C}_\xF((-\infty,-T_0)\times\mathbb{R}),$
where $B(\psi)$ denotes the solution to (\ref{2.14}).
We will show that the map $B$ defines a contraction mapping and we will apply the fixed point theorem to it.
First we note by Lemma \ref{remark} and Proposition \ref{fixth} that
$$
||B(0)||_{\mathcal{C}_\xF((-\infty,-T_0)\times\mathbb{R})}\leq \frac{C_0}{T_0^\xn}
$$
and by Proposition \ref{fixth} and Lemma \ref{cilemma*}
\bea\nonumber
&&||B(\psi_1)-B(\psi_2)||_{\mathcal{C}_\xF((-\infty,-T_0)\times\mathbb{R})}\\ \nonumber
&\leq& \frac{C}{T_0^\xn}
\left(||\psi_1-\psi_2||_{\mathcal{C}_\xF((-\infty,-T_0)\times\mathbb{R})}\right),
\eea
providing
$$
||\psi_i||_{\mathcal{C}_\xF((-\infty,-T_0)\times\mathbb{R})}\leq2\frac{C_0}{T_0^\xn}.
$$
Thus if we choose $T_0$ big enough we can apply the fix point theorem in
$$X_{T_0}=\{\psi:\;||\psi||_{\mathcal{C}_\xF((-\infty,-T_0)\times\mathbb{R})}<2\frac{C_0}{T_0^\xn}\},$$
to obtain that there exists $\psi$ such that $B(\psi)=\psi.$

b) For simplicity we set  $\psi^1 = 	\Psi(h_1)$ and
 $\psi^2 = 	\Psi(h_2).$ The estimate will be obtained by applying the estimate \eqref{estfix}. However, because each
 $\psi^i$ satisfies the orthogonality conditions (\ref{orthcond})  with $\xi(t) = \xi^i(t) := \xi^0(t)+ h_i(t),$ the
difference  $\psi^1-\psi^2$ doesn't satisfy an exact orthogonality condition. To overcome this technical difficulty
we will consider instead the difference $Y :=  \psi^1-\overline{\psi}^2,$ where

$$\overline{\psi}^2=\psi^2-\sum_{i=1}^k\xl_i(t)w'(x-\xi_i^1),$$
with
$$
\sum_{i=1}^k\xl_i(t)\int_{\mathbb{R}}w'(x-\xi_i^1(t))w'(x-\xi_j^1(t))dx=\int_{\mathbb{R}}\psi^2(t,x)w'(x-\xi_j^1(t)) dx,
$$
$j=1,...,k.$
Clearly, $Y$ satisfies the orthogonality conditions (\ref{orthcond}) with $\xi(t) = \xi^1(t).$ Denote by $L^i_t$ the
operator
$$
L^i_t\psi^i=\psi^i_t-\psi^i_{xx}+f'(z^i(t,x))\psi^i.
$$
By Lemmas \ref{dia1}, \ref{dia2} and \ref{dia3}
and the fact that
$$\frac{w'(x-\xi_i^1)}{\xF}\leq C|t|^{\frac{\xs}{\sqrt{2}}},\quad\forall\;k=1,...k$$
we can easily prove
\begin{align}\nonumber
||Y||_{\mathcal{C}_\xF((-\infty,-T_0)\times\mathbb{R})}&\leq \frac{C}{T_0^{\xn}}\left(||\psi_1-\psi_2||_{\mathcal{C}_\xF((-\infty,-T_0)\times\mathbb{R})}+||h_1-h_2||_{\xL}\right) \\ &+C\left(\sum_{i=1}^k\sup_{t\in(-\infty,-T_0)}|t|^{\frac{\xs}{\sqrt{2}}}|\xl_i(t)|\right).\label{pao0}
\end{align}
Now, by orthogonality conditions \eqref{orthcond} and \eqref{****}, we have
\bea\nonumber
\left|\int_{\mathbb{R}}\psi^2(t,x)w'(x-\xi_j^1(t))dx\right|&=&\left|\int_{\mathbb{R}}\psi^2(t,x)(w'(x-\xi_j^1(t))-w'(x-\xi_j^2(t)))dx\right|\\
&\leq& \frac{C}{T_0^\xn}|t|^{-\frac{\xs}{\sqrt{2}}}||h_1-h_2||_{\xL}.\label{pao1}
\eea

Now
\begin{align}\nonumber
&\left|\frac{d\int_{\mathbb{R}}\psi^2(t,x)w'(x-\xi_j^1(t))dx}{dt}\right|\\
&=\left|\frac{d\int_{\mathbb{R}}\psi^2(t,x)(w'(x-\xi_j^1(t))-w'(x-\xi_j^2(t)))dx}{dt}\right|.\label{pao2}
\end{align}
But
\begin{align*}
&\int_{\mathbb{R}}\psi^2_t(t,x)(w'(x-\xi_j^1(t))-w'(x-\xi_j^2(t)))dx\\
&=-\int_{\mathbb{R}}\psi^2_{xx}(t,x)(w'(x-\xi_j^1(t))-w'(x-\xi_j^2(t)))dx\\
&+\int_{\mathbb{R}}L^2_t\psi^2(w'(x-\xi_j^1(t))-w'(x-\xi_j^2(t)))dx\\
&-\int_{\mathbb{R}}f'(z^2(t,x))\psi^2(t,x)(w'(x-\xi_j^1(t))-w'(x-\xi_j^2(t)))dx\\
&=\int_{\mathbb{R}}\psi^2(t,x)(w'''(x-\xi_j^1(t))-w'''(x-\xi_j^2(t)))dx\\
&+\int_{\mathbb{R}}L^2_t\psi^2(w'(x-\xi_j^1(t))-w'(x-\xi_j^2(t)))dx\\
&-\int_{\mathbb{R}}f'(z^2(t,x))\psi^2(t,x)(w'(x-\xi_j^1(t))-w'(x-\xi_j^2(t)))dx.
\end{align*}
By the fix point argument in a) we have that
\be
\left|\int_{\mathbb{R}}\psi^2_t(t,x)(w'(x-\xi_j^1(t))-w'(x-\xi_j^2(t)))dx\right|\leq \frac{C}{T_0^\xn}|t|^{-\frac{\xs}{\sqrt{2}}}||h_1-h_2||_{\xL}.\label{pao3}
\ee
By \eqref{pao1}, \eqref{pao2}, \eqref{pao3} and definitions of $\xl_i$ we have that
$$
 |\xl_i(t)|+|\xl_i'(t)|\leq \frac{C}{T_0^\xn}|t|^{-\frac{\xs}{\sqrt{2}}}||h_1-h_2||_{\xL}.
$$
Combining all above we have that
$$
||Y||_{\mathcal{C}_\xF^{0}((-\infty,-T_0)\times\mathbb{R})}\leq \frac{C}{T_0^{\xn}}||\psi_1-\psi_2||_{\mathcal{C}_\xF((-\infty,-T_0)\times\mathbb{R})}+C||h_1-h_2||_{\xL}.
$$
But
\begin{align*}
||\psi_1-\psi_2||_{\mathcal{C}_\xF((-\infty,-T_0)\times\mathbb{R})}&\leq ||Y||_{\mathcal{C}_\xF((-\infty,-T_0)\times\mathbb{R})}+C\left(\sum_{i=1}^k\sup_{t\in(-\infty,-T_0)}|t|^{\frac{\xs}{\sqrt{2}}}|\xl_i(t)|\right)\\
&\leq  \frac{C}{T_0^{\xn}}||\psi_1-\psi_2||_{\mathcal{C}_\xF((-\infty,-T_0)\times\mathbb{R})}+\frac{C}{T_0^\xn}||h_1-h_2||_{\xL}
\end{align*}
and the proof of inequality \eqref{diafora} follows if we choose $T_0$ big enough.\hfill$\Box$
\setcounter{equation}{0}
\section{the choice of $\xi_i$}\label{xiint}
Let $T_0$ big enough, $\frac{\sqrt{2}}{2}<\xs<\sqrt{2}$ and $\psi\in\mathcal{C}_\xF((-\infty,-T_0)\times\mathbb{R})$ be the solution of the problem (\ref{mainpro}).
We want to find $\xi_i$ such that
\begin{align}\nonumber
0&=\int_{\mathbb{R}}\left(-f'(w(x-\xi_j(t)))+f'(z(t,x))\right)\psi(t,x)w'(x-\xi_j(t)) dx\\ \nonumber
&-\xi'_j(t)\int_{\mathbb{R}}\psi(t,x)w''(x-\xi_j(t))dx\\ \nonumber
&+\int_{\mathbb{R}}(E+N(\psi))w'(x-\xi_j(t))dx,\qquad\forall j=1,...,k,\;t<-T,
\end{align}
where
$$E=\sum_{j=1}^{k}(-1)^{j+1}w(x-\xi_j(t))\xi_j'+f(z(t,x))-\sum_{j=1}^{k}(-1)^{j+1}f(w(x-\xi_j(t))),$$
$$N(\psi)=f(\psi(t,x)+z(t,x))-f(z(t,x))-f'(z(t,x))\psi.$$

First we study the error term $E.$
Let $1<j<k,$ then we have that
\begin{align}\nonumber
&\int_{\mathbb{R}}\left(f(z(t,x))-\sum_{i=1}^{k}(-1)^{i+1}f(w(x-\xi_i(t)))\right)w'(x-\xi_j(t))dx\\ \nonumber
&=\int_{\mathbb{R}}\left(f(z(t,x+\xi_j(t)))-\sum_{i=1}^{k}(-1)^{i+1}f(w(x+\xi_j(t)-\xi_i(t)))\right)w'(x)dx.
\end{align}
For simplicity we assume that $i$ is even.
Set
\begin{align*}
g&=\sum_{i=1}^{j-2}(-1)^{i+1}\left(w(x+\xi_j(t)-\xi_{i}(t))-1\right)
\\&+\sum_{i=j+2}^{k}(-1)^{i+1}\left(w(x+\xi_j(t)-\xi_{i}(t))+1\right),
\end{align*}
$$g_1=w(x+\xi_j-\xi_{j-1})-1$$
and
$$g_2=w(x+\xi_j-\xi_{j+1})+1.$$

Using the fact that $\int_{\mathbb{R}}f(w(x))w'(x)dx=0,$ we have
\begin{align}\nonumber
&\int_{\mathbb{R}}f(z(t,x-\xi_j(t))w'(x)dx\\ \nonumber
&=\int_{\mathbb{R}}\left(g+g_1-w(x)+g_2)\right)\left(1-\left(g+g_1+g_2-w(x)\right)^2\right)w'(x)dx\\ \nonumber
&=\int_{\mathbb{R}}\left(g_1+g_2-3w^2(x)g_1-3w^2(x)g_2+3w(x)g_1^2+3w(x)g_2^2-g_1^3-g_2^3\right)w'(x)dx\\
&+\int_{\mathbb{R}}F_0(t,x)w'(x)dx,\label{1}
\end{align}
where $$F_0(t,x)=O(g)+O(g_1g_2).$$
We note that
\begin{align}\nonumber
\int_{\mathbb{R}}|g|w'(x)dx\leq &C\sum_{i=1,\;\; i\neq j-1,j,j+1}^{k}e^{-\xs\left|\xi_i(t)-\xi_j(t)\right|},\\ \nonumber
\int_{\mathbb{R}}|g_1g_2|w'(x)dx\leq &Ce^{-\sqrt{2}\left|\xi_{j+1}(t)-\xi_{j-1}(t)\right|}.
\end{align}

Let $F_1(t,x)=\sum_{i=1,\;\; i\neq j-1,j,j+1}^{k}f(w(x+\xi_i(t)-\xi_j(t))),$ then
\begin{align}\nonumber
&\int_{\mathbb{R}}\left(\sum_{i=1}^{k}(-1)^{j+1}f(w(x+\xi_j(t)-\xi_i(t)))\right)w'(x)dx\\ \nonumber &=\int_{\mathbb{R}}\left(f(g_1+1)+f(g_2-1)\right)w'(x)dx
+\int_{\mathbb{R}}F_1(t,x)w'(x)dx\\
&=\int_{\mathbb{R}}(-2g_1-3g_1^2-g_1^3-2g_2+3g_2^2-g_2^3)w'(x)dx+\int_{\mathbb{R}}F_1(t,x)w'(x)dx.\label{2}
\end{align}
Also we have that
$$\int_{\mathbb{R}}|F_1(t,x)|w'(x)dx\leq C\sum_{i=1,\;\; i\neq j-1,j,j+1}^{k}e^{-\xs\left|\xi_i(t)-\xi_j(t)\right|}.$$
By (\ref{1}), (\ref{2}) we have
\begin{align}\nonumber
&\int_{\mathbb{R}}\left(f(z(t,x-\xi_j(t)))-\sum_{i=1}^{k}(-1)^{i+1}f(w(x+\xi_j(t)-\xi_i(t)))\right)w'(x)dx\\ \nonumber
&=3\int_{\mathbb{R}}(g_1+g_2)(1-w^2(x))w'(x)dx+3\int_{\mathbb{R}}g_1^2(1+w(x))w'(x)dx \\ \nonumber
&+3\int_{\mathbb{R}}g_2^2(w(x)-1)w'(x)dx
+\int_{\mathbb{R}}F_0(t,x)w'(x)dx-\int_{\mathbb{R}}F_1(t,x)w'(x)dx
\end{align}
and
\begin{align}\nonumber
&\int_{\mathbb{R}}g_1(1-w^2(x))w'(x)dx\\ \nonumber
&=\int_{\mathbb{R}}\frac{-2e^{-\frac{\sqrt{2}}{2}(x+\xi_j-\xi_{j-1})}}{e^{\frac{\sqrt{2}}{2}(x+\xi_j-\xi_{j-1})}+e^{-\frac{\sqrt{2}}{2}(x+\xi_j-\xi_{j-1})}}
(1-w^2(x))w'(x)dx\\ \nonumber
&=-2e^{-\sqrt{2}(\xi_j-\xi_{j-1})}\int_{\mathbb{R}}\frac{1}{e^{\sqrt{2}x}+e^{-\sqrt{2}(\xi_j-\xi_{j-1})}}
(1-w^2(x))w'(x)dx\\ \nonumber
&=-2e^{-\sqrt{2}(\xi_j-\xi_{j-1})}\int_{\mathbb{R}}e^{-\sqrt{2}x}(1-w^2(x))w'(x)dx\\ \nonumber
&-2e^{-\sqrt{2}(\xi_j-\xi_{j-1})}\int_{\mathbb{R}}\left(\frac{1}{e^{\sqrt{2}x}+e^{-\sqrt{2}(\xi_j-\xi_{j-1})}}-e^{-\sqrt{2}x}\right)(1-w^2(x))w'(x)dx\\ \nonumber
&=-2e^{-\sqrt{2}(\xi_j-\xi_{j-1})}\\ \nonumber
&\times\left(\int_{\mathbb{R}}e^{-\sqrt{2}x}(1-w^2(x))w'(x)dx+\int_{\mathbb{R}}F_2(t,x)(1-w^2(x))w'(x)dx\right).
\end{align}
Now
\begin{align}\nonumber
&\left|\int_{\mathbb{R}}F_2(t,x)(1-w^2(x))w'(x)dx\right|\\ \nonumber
&=e^{-\sqrt{2}(\xi_j-\xi_{j-1})}
\int_{\mathbb{R}}\frac{1}{e^{\sqrt{2}x}\left(e^{\sqrt{2}x}+e^{-\sqrt{2}(\xi_j-\xi_{j-1})}\right)}(1-w^2(x))w'(x)dx\\ \nonumber
&\leq C (\xi_j-\xi_{j-1})e^{-\sqrt{2}(\xi_j-\xi_{j-1})}.
\end{align}

Similarly for $g_2$ we have
\begin{align}\nonumber
&\int_{\mathbb{R}}g_2(1-w^2(x))w'(x)dx=2e^{-\sqrt{2}(\xi_j-\xi_{j-1})}\\ \nonumber
&\times\left(\int_{\mathbb{R}}e^{-\sqrt{2}x}(1-w^2(x))w'(x)dx+\int_{\mathbb{R}}F_3(t,x)(1-w^2(x))w'(x)dx\right),
\end{align}
where
\begin{align*}
&\left|\int_{\mathbb{R}}F_3(t,x)(1-w^2(x))w'(x)dx\right|\\
&\qquad\leq C (\xi_{j+1}-\xi_{j})e^{-\sqrt{2}(\xi_{j+1}-\xi_{j})}.
\end{align*}
Now
\begin{align*}
\int_{\mathbb{R}}&g_1^2(1+w(x))w'(x)dx\\
&\leq Ce^{-2\sqrt{2}(\xi_j-\xi_{j-1})}
\int_{\mathbb{R}}\frac{1}{e^{2\sqrt{2}x}+e^{-2\sqrt{2}(\xi_j-\xi_{j-1})}}(1+w(x))w'(x)dx.
\end{align*}
But
\begin{align}\nonumber
&\int_{\mathbb{R}}\frac{1}{e^{2\sqrt{2}x}+e^{-2\sqrt{2}(\xi_j-\xi_{j-1})}}(1+w(x))w'(x)dx\\ \nonumber
&=\int_{-\infty}^{-\xi_j-\xi_{j-1}}\frac{1}{e^{2\sqrt{2}x}+e^{-2\sqrt{2}(\xi_j-\xi_{j-1})}}(1+w(x))w'(x)dx \\ \nonumber
&+\int^{0}_{-\xi_j-\xi_{j-1}}\frac{1}{e^{2\sqrt{2}x}+e^{-2\sqrt{2}(\xi_j-\xi_{j-1})}}(1+w(x))w'(x)dx\\ \nonumber
&+\int^{\infty}_{0}\frac{1}{e^{2\sqrt{2}x}+e^{-2\sqrt{2}(\xi_j-\xi_{j-1})}}(1+w(x))w'(x)dx\\ \nonumber
&\leq C((\xi_j-\xi_{j-1})+1).
\end{align}
Thus we have
$$\int_{\mathbb{R}}g_1^2(1+w(x))w'(x)dx\leq C(\xi_j-\xi_{j-1})e^{-2\sqrt{2}(\xi_j-\xi_{j-1})}.$$
Similarly
$$\int_{\mathbb{R}}g_2^2(1-w(x))w'(x)dx\leq C(\xi_{j+1}-\xi_{j})e^{-2\sqrt{2}(\xi_{j+1}-\xi_{j})}.$$
By assumptions on $\psi$ we have
\begin{align}\nonumber
&\int_{\mathbb{R}}|N(\psi)|w'(x-\xi_j(t))dx\\ \nonumber
&\leq C\int_{\mathbb{R}}\xF^2(t,x)w'(x-\xi_j(t))dx=C\int_{\mathbb{R}}\xF^2(t,x+\xi_j(t))w'(x)dx\\ \nonumber
&\leq C\sum_{i=1}^{k}\int_{\frac{\xi_i^0(t)+\xi_{i-1}^0(t)}{2}-\xi_j(t)}^{\frac{\xi_i^0(t)
+\xi_{i+1}^0(t)}{2}-\xi_j(t)}\left(e^{2\xs(-x-\xi_j(t)+\xi_{i-1}^0)}+e^{2\xs(x+\xi_j(t)-\xi_{i+1}^0)}\right)w'(x)dx.
\end{align}
Now note that
\begin{align}\nonumber
\int_{\frac{\xi_j^0(t)+\xi_{j-1}^0(t)}{2}-\xi_j(t)}^{\frac{\xi_j^0(t)
+\xi_{j+1}^0(t)}{2}-\xi_j(t)}e^{2\xs(x+\xi_j(t)-\xi_{j+1}^0)}w'(x)dx\leq Ce^{(-\xs-\frac{\sqrt{2}}{2})(\xi_{j+1}-\xi_j(t))}.
\end{align}
Thus we can easily prove that
$$\int_{\mathbb{R}}|N(\psi)|w'(x-\xi_j(t))dx\leq C\sum_{i=1,\;\; i\neq j}^{k}e^{(-\xs-\sqrt{2})\left|\xi_i(t)-\xi_j(t)\right|}.$$

Also we have
$$\sum_{i=1}^{k}(-1)^{j+1}\xi_i'\int_{\mathbb{R}}w'(x-\xi_i(t))w'(x-\xi_j(t))dx=-\xi'_j(t)\int_{\mathbb{R}}|w'(x)|^2dx+F_4(t),$$
where
$$|F_4(t)|\leq C\sum_{i=1,\;i\neq j}^{k}|\xi_i'|e^{-\xs|\xi_i-\xi_j|}.$$

Finally
\begin{align}\nonumber
&\left|\xi'_j(t)\int_{\mathbb{R}}\psi(t,x)w''(x-\xi_j(t))dx\right|\leq C|\xi'_j(t)|\int_{\mathbb{R}}\xF(t,x+\xi_j(t))w''(x)dx\\ \nonumber
&\qquad\leq C|\xi'_j(t)|\left(e^{(-\frac{\xs}{2}-\sqrt{2})\left|\xi_{j+1}(t)-\xi_j(t)\right|}+e^{(-\frac{\xs}{2}-\sqrt{2})\left|\xi_{j-1}(t)-\xi_j(t)\right|}\right).
\end{align}

Similarly for   $j=1,...,k,$ we can reach at the respective ODE, for $\xi=(\xi_1,...,\xi_k)$
\be
\frac{1}{\xb}\xi_j'-e^{-\sqrt{2}(\xi_{j+1}-\xi_j)}+e^{-\sqrt{2}(\xi_{j}-\xi_{j-1})}=F_i(\xi',\xi),\qquad j=1,2,...,k,\;\;t\in(0,-T_0],\label{ode*}
\ee
with $\xi_{k+1}=\infty$ and $\xi_0=-\infty.$

 We recall here that, we assume $T_0>1$ and we denote by
$$\xL=\{h\in C^1(-\infty,-T_0]:\;\sup_{t\leq -T_0}|h(t)|+\sup_{t\leq -T_0}|t||h'(t)|<1\}$$
and
$$||h||_\xL=\sup_{t\leq -T_0}(|h(t)|)+\sup_{t\leq -T_0}(|t||h'(t)|).$$
We set
$$\overline{F}(h',h)=F(\xi',\xi),$$
where $\xi=\xi^0+h.$
Working like above and Lemmas \ref{pao1}, \ref{pao2}, \ref{pao3} and using \eqref{diafora} we have the following result.

\begin{proposition}
Let $\frac{\sqrt{2}}{2}<\xs<\sqrt{2}$ and $h,\;h_1,\;h_2\in \xL.$ Then there exists a constant $C=C(\xs)$ such that
$$|\overline{F}(h',h)|\leq \frac{C}{|t|^{\frac{1}{2}+\frac{\xs}{\sqrt{2}}}},$$
and
$$|\overline{F}(h'_1,h_2)-\overline{F}(h'_1,h_2)|\leq \frac{C}{|t|^{\frac{1}{2}+\frac{\xs}{\sqrt{2}}}}||h_1-h_2||_{\xL}.$$\label{remark2}
\end{proposition}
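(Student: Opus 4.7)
The proposition asserts that $\overline{F}(h',h)$, which collects every contribution to the reduced ODE \eqref{ode*} other than the leading Toda interaction $\xb\bigl[e^{-\sqrt{2}(\xi_{j+1}-\xi_j)}-e^{-\sqrt{2}(\xi_{j}-\xi_{j-1})}\bigr]$, is a lower-order perturbation of the $1/|t|$ main term and depends Lipschitz continuously on $h\in\xL$. My plan is as follows.

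First, I would revisit the term-by-term analysis carried out in Section \ref{xiint} and sort every contribution to $\overline{F}$ according to its decay rate in $|t|$. Using the explicit ansatz $\xi^0_j(t) = \tfrac{1}{\sqrt{2}}(j-\tfrac{k+1}{2})\log(-2\sqrt{2}\xb t) + \xg_j$ together with $\|h\|_{\xL}<1$, each factor $e^{-\xs|\xi_i-\xi_j|}$ translates into $|t|^{-\xs|i-j|/\sqrt{2}}$ and $|\xi_j'(t)|\lesssim 1/|t|$. The pieces entering $\overline{F}$ are then the higher-order remainders $F_2,F_3$ together with the quadratic terms $\int g_1^2(1+w)w'\,dx$ and $\int g_2^2(1-w)w'\,dx$, all of size $(\log|t|)/|t|^2$; the non-nearest-neighbour contributions from $g$ and $F_1$, of size $|t|^{-\xs\sqrt{2}}$; the nonlinear projection $\int|N(\psi)|w'(x-\xi_j)\,dx\lesssim |t|^{-1-\xs/\sqrt{2}}$, using the bound $|\psi|\lesssim \xF$ coming from Proposition \ref{mainproposition}; and the off-diagonal piece $F_4$ together with the correction $\xi_j'\int\psi\,w''\,dx$, both $\lesssim |t|^{-1-\xs/\sqrt{2}}$. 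Comparing these powers, the dominant contribution is the non-nearest-neighbour piece of order $|t|^{-\xs\sqrt{2}}$, and the hypothesis $\xs>\sqrt{2}/2$ is exactly what is required so that $\xs\sqrt{2}\geq \tfrac{1}{2}+\xs/\sqrt{2}$, yielding the claimed bound $|\overline{F}|\leq C|t|^{-1/2-\xs/\sqrt{2}}$.

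Second, for the Lipschitz dependence on $h$, I would carry out the same enumeration, this time forming term-by-term differences in $\xi=\xi^0+h$. Each exponential $e^{-\sqrt{2}(\xi_{i+1}-\xi_i)}$ yields on comparison a factor bounded by $\|h_1-h_2\|_{\xL}$ multiplied by the original decay, so the leading Toda-like pieces produce contributions of size $(1/|t|)\|h_1-h_2\|_\xL$; refining with the polynomial factors identified in Step 1 upgrades this to the sharper rate $|t|^{-1/2-\xs/\sqrt{2}}$. The $N(\psi)$ and $E$ contributions are controlled by the Lipschitz estimates already proved in Lemmas \ref{dia1}, \ref{dia2}, \ref{dia3}, combined with \eqref{diafora}. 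Because $\psi_1=\Psi(h_1)$ and $\psi_2=\Psi(h_2)$ are orthogonal with respect to different centres $\xi_j^0+h^i_j$, the comparison of their projections onto $w'(\cdot-\xi_j^1(t))$ requires the subtract-and-add device used in part (b) of the proof of Proposition \ref{mainproposition}.

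The main obstacle is the non-nearest-neighbour estimate $|t|^{-\xs\sqrt{2}}$: the restriction $\xs>\sqrt{2}/2$ is sharp for its absorption into $|t|^{-1/2-\xs/\sqrt{2}}$, and one must be scrupulous in tracking every polynomial factor of the form $\xi_j-\xi_{j-1}\sim\log|t|$ multiplying the exponentials, since a stray logarithm would push the bound past the allowed threshold.
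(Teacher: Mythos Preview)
Your proposal is correct and follows precisely the route the paper takes: the paper's own proof is the single line ``Working like above and Lemmas \ref{dia1}, \ref{dia2}, \ref{dia3} and using \eqref{diafora} we have the following result,'' and your plan is simply a faithful unpacking of that sentence---you collect the remainder terms computed in Section \ref{xiint}, convert each exponential in $|\xi_i-\xi_j|$ into a power of $|t|$, and invoke the Lipschitz Lemmas \ref{dia1}--\ref{dia3} together with \eqref{diafora} for the dependence on $h$. Your identification of the non-nearest-neighbour contribution $|t|^{-\sigma\sqrt{2}}$ as the term that forces the hypothesis $\sigma>\sqrt{2}/2$ (via $\sigma\sqrt{2}\ge \tfrac12+\sigma/\sqrt{2}$) is exactly the mechanism behind the stated exponent.
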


In the rest of this section we will study the system \ref{ode*} using some ideas from \cite{dds}.
\subsection{the choice of $\xi^0$}\label{ksi0}
Let $k\geq4$ be an even number. First, we want to find a solution of the problem
\be
\frac{1}{\xb}\xi_j'-e^{-\sqrt{2}(\xi_{j+1}-\xi_j)}+e^{-\sqrt{2}(\xi_{j}-\xi_{j-1})}=0,\qquad j=1,2,...,k,\;\;t\in(0,-T_0],\label{ode}
\ee
with $\xi_{k+1}=\infty$ and $\xi_0=-\infty.$
We set
$$R_l(\xi):=-e^{-\sqrt{2}(\xi_{j+1}-\xi_j)}+e^{-\sqrt{2}(\xi_{j}-\xi_{j-1})}$$
and
$$
\textbf{R}(\xi)=\left[ \begin{array}{ccc}
R_1(\xi) \\
\vdots  \\
R_k(\xi)  \end{array} \right]
.$$
We want to solve the system $\xi'+\xb\textbf{R}(\xi)=0.$ To do so we find first a convenient representation of the operator $\textbf{R}(\xi).$ Let us consider the auxiliary variables

$$
\textbf{v}:=\left[ \begin{array}{ccc}
\mathbf{\overline{v}} \\
v_k
  \end{array} \right],
  \qquad
\mathbf{\overline{v}}= \left[ \begin{array}{ccc}
v_1 \\
\vdots  \\
v_{k-1}  \end{array} \right],
$$
defined in terms of $\xi$ as
$$v_l=\xi_{l+1}-\xi_l\;\;\;\mathrm{with}\;l=1,...,k-1,\qquad v_k=\sum_{l=1}^k\xi_l$$
and define the operators
$$
\mathbf{S}(\textbf{v}):=\left[ \begin{array}{ccc}
\overline{\mathbf{S}}(\mathbf{\overline{v}}) \\
0
  \end{array} \right],
  \qquad
\overline{\mathbf{S}}(\mathbf{\overline{v}})= \left[ \begin{array}{ccc}
S_1(\overline{\mathbf{v}}_1) \\
\vdots  \\
S_{k-1}(\overline{\mathbf{v}}_1)  \end{array} \right],
$$
where $S_l(\overline{\mathbf{v}}):R_{l+1}(\xi)-R_l(\xi)=$
$$
\Bigg\{ \begin{array}{ccc}
2e^{-\sqrt{2}v_1}-e^{\sqrt{2}v_2} &\mathrm{if}\qquad l=1  \\
-e^{\sqrt{2}v_{l-1}}+2e^{-\sqrt{2}v_l}-e^{\sqrt{2}v_{l-1}}&\mathrm{if}\qquad 2\leq l\leq k-2  \\
2e^{-\sqrt{2}v_k}-e^{\sqrt{2}v_{k-1}} &\mathrm{if}\qquad l=k-1
  \end{array}.
$$
Then the operators  $\mathbf{R}$ and $\mathbf{S}$ are in correspondence through the formula
$$\mathbf{S}(\mathbf{v})=\mathbf{B}\mathbf{R}(\mathbf{B}^{-1}\mathbf{v}),$$
where $\mathbf{B}$ is the constant, invertible $k\times k$ matrix
$$\mathbf{\mathbf{B}}=\left[ \begin{array}{ccccc}
-1 & 1 & 0&\cdots&0 \\
0 & -1 & 1&\cdots&0 \\
\vdots & \ddots & \ddots&\ddots&\vdots\\
0&\cdots&0&-1&1\\
1&\ldots& 1&1&1
\end{array} \right]$$
and then through the relation $\xi=\mathbf{B}^{-1}\mathbf{v}$ the system $\xi'+\xb\mathbf{R}(\xi)=0$ is equivalent to $\mathbf{v}'+\xb\mathbf{S}(\mathbf{v})=0,$ which decouples into

\bea
\nonumber
\overline{v}+\xb\overline{\mathbf{S}}(\mathbf{\overline{v}})&=&0,\\ \nonumber
v_k'&=&0,
\eea
where
\be
\overline{\mathbf{S}}(\mathbf{\overline{v}})= \mathbf{C}\left[ \begin{array}{ccc}
e^{-\sqrt{2} v_1} \\
\vdots  \\
e^{-\sqrt{2}v_{k-1}}  \end{array} \right]
,\qquad
\mathbf{C}=\left[ \begin{array}{ccccc}
2 & -1 & 0&\cdots&0 \\
-1 & 2 & -1&\cdots&0 \\
\vdots & \ddots & \ddots&\ddots&\vdots\\
0&\cdots&-1&2&-1\\
0&\ldots& &-1&2
\end{array} \right].\label{C}
\ee
We choose simply $v_k=0$ and loo for a solution $\mathbf{v}^0(t)=(\overline{\mathbf{v}}(t)^0,0)$ of the system, where $\overline{\mathbf{v}}^0(t)$ has the form
\be
\overline{v}_l^0(t)=\frac{1}{\sqrt{2}}\log(-2\sqrt{2}\xb t)+b_l,\label{v}
\ee
for constants $b_l$ to be determined.

Substituting this expression into the system we find the following equations for the numbers $b_l$
$$
\mathbf{C}\left[ \begin{array}{ccc}
e^{-\sqrt{2} b_1} \\
\vdots  \\
e^{-\sqrt{2}b_{k-1}}  \end{array} \right]=\frac{1}{\xb}\left[ \begin{array}{ccc}
1 \\
\vdots  \\
1  \end{array} \right]
.$$
We compute explicitly,
$$b_l=-\frac{1}{\sqrt{2}}\log\left(\frac{1}{2\xb}(k-l)l\right),\qquad l=1,...,k-1.$$
Now we note that
$b_l=b_{k-l}$ for $l=1,..,k-1,$ thus by (\ref{ode}) we have that $$\xi_{k-j+1}=-\xi_{j},\;\;j\leq\frac{k}{2},$$
and
$$\xi_j=\frac{1}{\sqrt{2}}\left(j-\frac{k+1}{2}\right)\log(-2\sqrt{2}\xb t)+\xg_j,$$
where
$$-\xg_j=\xg_{k-j+1}=\frac{1}{2}\sum_{i=j}^{k-j}b_i,\qquad\mathrm{for}\;j\leq\frac{k}{2}.$$

\subsection{the solution of the problem \eqref{ode*}}\label{general}
We keep the notations of the previous subsection, and we write problem \eqref{ode*} in the form
we consider the problem
$$\xi'+\xb\mathbf{R}(\xi)= \mathbf{F}(\xi',\xi),\qquad \mathrm{in} \;(-\infty,- T_0].$$
Let $\xi^0=(\xi_1^0,...,\xi_k^0)^T$ where
$$\xi_j^0=\frac{1}{\sqrt{2}}\left(j-\frac{k+1}{2}\right)\log(-2\sqrt{2}\xb t)+\xg_j,$$
We look for solution of the form $\xi=\xi^0+h.$ Thus $h$ satisfies
\bea\nonumber
h'+\xb D_\xi\mathbf{R}(\xi^0)h&=& \mathbf{F}({\xi^0}'+h',\xi^0+h)+\xb D_\xi\mathbf{R}(\xi^0)h-\xb\mathbf{R}(\xi^0)\\ \nonumber
&=&\mathbf{E}(h',h),\qquad \mathrm{in} \;(-\infty,-T_0].\label{ode1}
\eea
By Proposition \ref{remark2},  we have
\begin{align}\nonumber
|\mathbf{E}(0,0)|&\leq C\left(\frac{1}{|t|}\right)^{\frac{1}{2}+\frac{\xs}{\sqrt{2}}},\\
|\mathbf{E}(h_1',h_1)-\mathbf{E}(h_1',h_1)|&\leq C\left(\frac{1}{t}\right)^{\frac{1}{2}+\frac{\xs}{\sqrt{2}}}|h_1-h_2|+C\left(\frac{1}{t}\right)^{\frac{1}{2}+\frac{\xs}{\sqrt{2}}}|h'_1-h_2'|.\label{estimatesode}
\end{align}
Also we are restricting ourselves to symmetric $\xi,$ then $h$ satisfies the symmetry condition
$$h_{k-j+1}=-h_j,\qquad j\leq\frac{k}{2}.$$
In addition this implies that the solution $\psi$ is even with respect $x$ and thus we have that
\be
E_{k-j+1}=E_j,\qquad j\leq\frac{k}{2}.\label{E1}
\ee

Set
$$v^0=\mathbf{B}\xi^0\qquad\mathrm{and}\qquad p=\mathbf{B}h.$$
Then we have that $\mathbf{E}(h',h)=\mathbf{E}(\mathbf{B}^{-1}h',\mathbf{B}^{-1}h)=\mathbf{E}(p',p),$ and by
$$\mathbf{S}(\mathbf{v})=\mathbf{B}\mathbf{R}(\mathbf{B}^{-1}\mathbf{v}),$$
we have that
$$\mathbf{S}(\mathbf{v^0})=\mathbf{B}\mathbf{R}(\xi^0)\mathbf{B}^{-1}.$$
Thus (\ref{ode1}) is equivalent to
\be
p'+\xb D_v\mathbf{S}(\mathbf{v^0})p=\mathbf{B}\mathbf{E}(p',p):=\mathbf{L}(p',p),\;\;\;\mathrm{in} \;(-\infty,-T_0].\label{ode2}
\ee
By (\ref{E1}) we have that $\mathbf{L}_k=0,$ thus writting $p=(\overline{p},p_k)$ and $\mathbf{L}=(\overline{\mathbf{L}},L_k),$ the latter system decouples as
\bea\nonumber
\overline{p}'+\xb D_{\overline{v}}\mathbf{\overline{S}}(\mathbf{\overline{v}^0})&=&\overline{\mathbf{L}}(\overline{p}',\overline{p}),\;\;\;\mathrm{in} \;(-\infty,-T_0],\\
p_k'&=&0,
\label{ode3}
\eea
where we have simply choose $p_k=0.$

Now, by (\ref{v}) we have
\bea\nonumber
D_{\overline{v}}\mathbf{\overline{S}}(\mathbf{\overline{v}^0})&=&-\sqrt{2}\mathbf{C} \left[ \begin{array}{ccccc}
e^{-\sqrt{2} v_1} & 0 &\cdots &0 \\
0 &e^{-\sqrt{2} v_2} & \cdots &0  \\
\vdots& &\ddots &\vdots  \\
0 &0 &\cdots & e^{-\sqrt{2}v_{k-1}}  \end{array} \right]\\ \nonumber
&=&\frac{1}{2\xb t}\mathbf{C}\left[ \begin{array}{ccccc}
a_1 & 0 &\cdots &0 \\
0 &a_2 & \cdots &0  \\
\vdots& &\ddots &\vdots  \\
0 &0 &\cdots & a_{k-1}  \end{array} \right],
\eea
where $a_l=\frac{1}{2\xb}(k-l)l,\;l=1,...,k-1,$
where the matrix $\mathbf{C}$ is given in (\ref{C}). $\mathbf{C}$ is symmetric and positive definite. Indeed, a straightforward computation yields that its eigenvalues are explicitly given by
$$1,\frac{1}{2},...,\frac{k-1}{k}.$$
We consider the symmetric, positive definite square root matrix of $\mathbf{C}$ and denote it by $\mathbf{C}^{\frac{1}{2}}.$ Then setting
$$\overline{p}=\mathbf{C}^\frac{1}{2}w,\qquad Q(w',w)=\mathbf{C}^{-\frac{1}{2}}\overline{\mathbf{L}}(\mathbf{C}^{\frac{1}{2}}w',\mathbf{C}^{\frac{1}{2}}w),$$
we see that equation (\ref{ode3}) becomes
\be
w'+\frac{1}{2t}\mathbf{A}w=Q(w',w),\label{ode4}
\ee
where
$$\mathbf{A}=\mathbf{C}^\frac{1}{2}\left[ \begin{array}{ccccc}
a_1 & 0 &\cdots &0 \\
0 &a_2 & \cdots &0  \\
\vdots& &\ddots &\vdots  \\
0 &0 &\cdots & a_{k-1}  \end{array} \right]\mathbf{C}^\frac{1}{2}.$$
In particular $\mathbf{A}$ has positive eigenvalues $\xl_1,\xl_2,...,\xl_{k-1}.$ Let the orthogonal matrix $\mathbf{\xL}$ such that $\mathbf{D} = \mathbf{\xL}^T \mathbf{A}\mathbf{\xL},$ where $\mathbf{D}$  is the diagonal matrix such that $A_{ii}=\xl_i,\;i=1,...,k-1.$
Set now
$$\xo=\mathbf{\xL}^Tw,\qquad\mathbf{\xG}(\xo',\xo)=\mathbf{\xL}^TQ(\mathbf{\xL}\xo',\mathbf{\xL}\xo),$$
we have that (\ref{ode4}) becomes equivalent to
\be
\xo'+\frac{1}{2t}\mathbf{D}\xo=\mathbf{\xG}(\xo',\xo),\quad\mathrm{in} (-\infty,-T_0).\label{odepro}
\ee
We will solve (\ref{odepro}) by using the fix point Theorem in a suitable space with initial data $w(T_0)=0$. If $\xo$ is a solution of the problem (\ref{odepro}) with initial data then has the form
\be
\xo_i(t)=-\frac{1}{(-t)^{\sqrt{\xl_i}}}\int^{-t_0}_t (-s)^{\sqrt{\xl_i}}\xG_i(\xo',\xo)ds.\label{odefix}
\ee
Let $A(\xo)$ be a solution of (\ref{odefix}), then $\mathbf{\xG}$ satisfies the same estimates in (\ref{estimatesode}) and we have
\bea
|A(0)|\leq C_1\left(\frac{1}{T_0}\right)^{\frac{\xs}{\sqrt{2}}-\frac{1}{2}}.\label{fragma}
\eea
Similarly
\be
|t||A(0)'|\leq C_2\left(\frac{1}{T_0}\right)^{\frac{\xs}{\sqrt{2}}-\frac{1}{2}},\label{fragma2}
\ee
if we choose $t_0>1.$
Thus we consider the space
$$X=\{h\in C^1(-\infty,-t_0]:\;||h||_\xL\leq2c_0\},$$
where $c_0=C_1+C_2$ the constants in (\ref{fragma}) and (\ref{fragma2}).
Thus
$$
|A(h_1)-A(h_2)|\leq C\left(\frac{1}{T_0}\right)^{\frac{\xs}{\sqrt{2}}-\frac{1}{2}}||h_1-h_2||_\xL,
$$
$$
|t||A'(h_1)-A'(h_2)|\leq
C||h_1-h_2||_\xL\left(\frac{1}{T_0}\right)^{\frac{\xs}{\sqrt{2}}-\frac{1}{2}}.
$$
Thus we have
$$||A(h_1)-A(h_2)||_{\xL}\leq C(\xs)\left(\frac{1}{T_0}\right)^{\frac{\xs}{\sqrt{2}}-\frac{1}{2}}.$$
The result follows by fixed point theorem if we choose $T_0$ big enough.

\bigskip
\noindent\emph{Acknowledgment } This work has been supported by  Fondecyt grants  3140567 and 1150066, Fondo Basal CMM and by Millenium Nucleus CAPDE NC130017.


\begin{thebibliography}{RRRRR}


\bibitem{ac}
S. M. Allen and J. W. Cahn, {\em A microscopic theory for antiphase boundary motion and its application to antiphase domain coarsening,} Acta. Metall., 27 (1979), 1084-1095.








\bibitem{cp1}
    J. Carr, J., R.L. Pego, {\em Metastable patterns in solutions of  $u_t=\varepsilon^2 u_{xx}-f(u)$.}  Comm. Pure Appl. Math. 42 (1989), no. 5, 523-576.

\bibitem{cp2}  J. Carr, J., R.L. Pego, {\em
Invariant manifolds for metastable patterns in $u_t=\varepsilon^2 u_{xx}-f(u)$.} Proc. Roy. Soc. Edinburgh Sect. A 116 (1990), no. 1-2, 133-160.


\bibitem{chen}
X. Chen, Xinfu,  J.-S. Guo, H.  Ninomiya, {\em  Entire solutions of reaction-diffusion equations with balanced bistable nonlinearities.} Proc. Roy. Soc. Edinburgh Sect. A 136 (2006), no. 6, 1207-1237.

    \bibitem{5authors}
    X. Chen, J.-S. Guo, F. Hamel, H. Ninomiya, J. Roquejoffre, {\em
    Traveling waves with paraboloid like interfaces for balanced bistable dynamics.} Ann. Inst. H. Poincare Anal. Non Lineaire 24 (2007), no. 3, 369-393.





    \bibitem{dkwcpam}
    M. del Pino, M. Kowalczyk, J. Wei, {\em
    Traveling waves with multiple and nonconvex fronts for a bistable semilinear parabolic equation.} Comm. Pure Appl. Math. 66 (2013), no. 4, 481-547.


\bibitem{dds} {P. Daskalopoulos, M. del Pino, N. Sesum, {\em  Type II ancient compact solutions for the Yamabe flow,} Preprint  arXiv:1209.5479 }






\bibitem{fuscohale}
G. Fusco, J.K. Hale, {\em
Slow-motion manifolds, dormant instability, and singular perturbations.}
J. Dynam. Differential Equations 1 (1989), no. 1, 75--94.




\bibitem{hamel0}
F. Hamel, R. Monneau, J.-M. Roquejoffre, {\em
Stability of travelling waves in a model for conical flames in two space dimensions,}  Ann. Scient. Ec. Norm. Sup.  37 (2004), 469-506.



\bibitem{hamel2}
F. Hamel and N. Nadirashvili, {\em Travelling fronts and entire solutions of the Fisher-KPP equation in $\R^N$}, Arch. Rat. Mech. Anal., 157 (2001), 91-163.





\bibitem{L} {G.M. Lieberman, {\em  Second order parabolic differential equations.} World Scientific Publishing Co., Inc., River Edge, NJ, 1996. xii+439 pp. ISBN: 981-02-2883-X}


\bibitem{matano}
H. Matano, M. Nara and M. Taniguchi, {\em Stability of planar waves in the Allen-Cahn equation,} Comm. Part. Diff. Equations, 34 (2009), 976-1002.

\bibitem{ninomiya}
H. Ninomiya and M. Taniguchi, {\em Existence and global stability of traveling curved fronts in the Allen-Cahn equations}, J. Differential Equations, 213 (2005), 204-233.


\bibitem{taniguchi}
M. Taniguchi, {\em  Multi-dimensional traveling fronts in bistable reaction-diffusion equations}, Discrete Contin. Dyn. Syst. 32 (2012), 1011-1046.
\end{thebibliography}
\end{document}